\renewcommand{\i}{\hbox{\rm i}}
\newcommand{\bbN}{{\mathbb{N}}}
\newcommand{\bbR}{{\mathbb{R}}}
\newcommand{\bbZ}{{\mathbb{Z}}}
\newcommand{\bbC}{{\mathbb{C}}}
\newcommand{\bbT}{{\mathbb{T}}}
\newcommand{\cB}{{\mathcal B}}
\newcommand{\cE}{{\mathcal E}}
\newcommand{\cF}{{\mathcal F}}
\newcommand{\cG}{{\mathcal G}}
\newcommand{\cT}{{\mathcal T}}
\newcommand{\cW}{{\mathcal W}}
\newcommand{\lb}{\label}
\newcommand{\bu}{{\mathbf u}}
\newcommand{\bv}{{\mathbf v}}
\newcommand{\bk}{{\mathbf k}}
\newcommand{\bY}{{\mathbf Y}}
\newcommand{\bp}{{\mathbf p}}
\newcommand{\bq}{{\mathbf q}}
\newcommand{\bx}{{\mathbf x}}
\newcommand{\by}{{\mathbf y}}
\newcommand{\bz}{{\mathbf z}}
\newcommand{\spec}{\operatorname{Spec}}
\newcommand{\ran}{\operatorname{ran}}
\renewcommand{\div}{\operatorname{div}}
\renewcommand{\det}{\operatorname{det}}
\newcommand{\re}{\operatorname{Re}}
\newcommand{\curl}{\operatorname{curl}}
\newcommand{\diag}{\operatorname{diag}}
\theoremstyle{plain}
\newtheorem{theorem}{Theorem}[section]
\newtheorem{lemma}[theorem]{Lemma}
\newtheorem{corollary}[theorem]{Corollary}
\theoremstyle{definition}
\newtheorem{remark}[theorem]{Remark}
\begin{document}
\allowdisplaybreaks

\title[Characteristic determinants]{Characteristic determinants for a second order difference equation on the half-line arising in hydrodynamics}

%\thanks{Partially supported by the Simons Foundation. We would like to thank Samuel Walsh for many helpful discussions.}
\author[Y. Latushkin]{Yuri Latushkin}
\thanks{Y.L was supported by the NSF grant DMS-2106157, and would like to
thank the Courant Institute of Mathematical Sciences at NYU and
especially Prof.\ Lai-Sang Young for their hospitality.}
\address{University of Missouri, Columbia, MO 65211, USA}
\email{latushkiny@missouri.edu}

\author[S. Vasudevan]{Shibi Vasudevan}
\address{Krea University, Sri City, Andhra Pradesh, 517646, India}
\email{shibi.vasudevan@krea.edu.in}
%\urladdr{http://www.math.missouri.edu/personnel/faculty/latushkiny.html}
\date{\today}
\dedicatory{To Yuriy Karlovich on his seventy fifth birthday}
%\date{, 2003.}
\keywords{2D Euler equations, unidirectional flows, continued fractions, Jost function, Fredholm determinants, Evans function, unstable eigenvalues, half-line equations}
%\keywords{Schr\"odinger equation, Hamiltonian systems, Jost function, traveling waves, eigenvalues}

%%%%%%%%%%%%%%%%%%%%%%%%%%%%%%%%%%%%%%%%%%%%%%%%%%%%%%%%%%%%%%%%%%%%%%%%%%%%%%%%%%%%%%%%%

%\begin{document}
%
%\title{Characteristic determinants for a second order difference equation on the half-line arising in hydrodynamics}
%% \titlerunning{Characteristic determinants} %for an abbreviated version of
%% your contribution title if the original one is too long
%\author{Yuri Latushkin %\orcidID{0000-1111-2222-3333}   
% and\\  Shibi Vasudevan%\orcidID{1111-2222-3333-4444
%}
%% Use \authorrunning{Short Title} for an abbreviated version of
%% your contribution title if the original one is too long
%\institute{Yuri Latushkin \at University of Missouri, Columbia, MO 65211, USA, \email{latushkiny@missouri.edu}
%\and Shibi Vasudevan \at Krea University, Sri City, Andhra Pradesh, 517646, India, \email{shibi.vasudevan@krea.edu.in}}
%%
%% Use the package "url.sty" to avoid
%% problems with special characters
%% used in your e-mail or web address
%%
%
%\date{\today}

\begin{abstract}
We study the point spectrum of a second order difference operator with complex potential on the half-line via Fredholm determinants of the corresponding Birman-Schwinger operator pencils, the Evans and the Jost functions. An application is given to instability of a generalization of the Kolmogorov flow for the Euler equation of ideal fluid on the two dimensional torus.
\end{abstract}

\maketitle

%\abstract{We study the point spectrum of a second order difference operator with complex potential on the half-line via Fredholm determinants of the corresponding Birman-Schwinger operator pencils, the Evans and the Jost functions. An application is given to instability of a generalization of the Kolmogorov flow for the Euler equation of ideal fluid on the two dimensional torus.}

%\begin{center}\em{To Yuriy Karlovich on his seventy fifth birthday}\end{center}

\section{Introduction and main results} In this paper we continue the work began in \cite{LV24} and study the eigenvalues of the following boundary value problem for the second order  asymptotically autonomous difference equation on the half-line $\bbZ_+=\{0,1,\dots\}$,
\begin{eqnarray}\label{main}
z_{n-1}-z_{n+1}+(b_nc_n)z_n&=&\lambda z_n, \quad n\ge0,\\
\label{main-bc}
z_{-1}&=&0,
\end{eqnarray}
where  $\lambda\in\bbC$ is  the spectral parameter,
 \begin{equation}\label{condbc}
 (b_n)_{n\ge0}\in\ell^2(\bbZ_+;\bbC) \text{ and } (c_n)_{n\ge0}\in\ell^2(\bbZ_+;\bbC)
 \end{equation} are two given complex valued sequences that, in general, may depend on $\lambda$ holomorphically. We are seeking to characterize the values of the spectral parameter such that \eqref{main}--\eqref{main-bc} has a nontrivial solution $\bz=(z_n)_{n\ge0}\in\ell^2(\bbZ_+;\bbC)$. This question is important in stability issues for special steady state solutions of the two-dimensional Euler equation, the so called {\em generalized Kolmogorov flow}, and our main application is a result on its instability.
 Specifically, in the current paper we define (and prove that they are being equal) four functions of the spectral parameter whose zeros are the eigenvalues of \eqref{main}--\eqref{main-bc}. We call the functions {\em characteristic determinants}.
 Our overall strategy is as in \cite{LV24} where the full line case has been considered, but the treatment of the important half-line case is technically more challenging;
  in particular, in the current paper we have to prove from the outset for the half-line case  analogs of some results obtained in \cite{CL07} for the line case and essentially used in \cite{LV24}. By reflection, analogous results hold for the equations on the negative half-line.
 
 An important particular case  of \eqref{main}--\eqref{main-bc} is the following eigenvalue problem for a difference equation arising in stability analysis of the generalized Kolmogorov flow of the Euler equation of ideal fluids on 2D torus, as seen 
 below and considered in \cite{DLMVW20} and \cite{LV24},
\begin{equation}\label{eulerev1}
z_{n-1}-z_{n+1}={\lambda}z_n/{\rho_n}, \quad n\ge0,\quad
z_{-1}=0,
\end{equation}
where $(\rho_n)_{n\ge0}$ is a given sequence satisfying the following conditions:
\begin{equation}\label{condrho}
\rho_0<0,\, \rho_n\in(0,1), \, n\ge1, \text{ and $\rho_n=1+O(1/n^2)$ as $|n|\to\infty$.}
\end{equation}
 The problem \eqref{eulerev1} is reduced to \eqref{main}--\eqref{main-bc} by setting 
 \begin{equation}\label{defbc}
 b_n=-\lambda\sqrt{1-\rho_n}/\rho_n \text{ and } c_n=\sqrt{1-\rho_n}.\end{equation}
For \eqref{eulerev1} we use continued fractions to define yet another, fifth function, also proven to be equal to the previous four characteristic determinants, whose zeros are the eigenvalues. This result, in turn, yields instability of the generalized Kolmogorov flow.

It is convenient to re-write \eqref{main}--\eqref{main-bc} as 
\begin{equation}\label{ev3}
\big(S-S^{-1}+\diag_{n\in\bbZ_+}\{b_nc_n\}\big)\bz=\lambda\bz,
\end{equation}
where we denote by $S:(z_n)_{n\ge0}\mapsto (0,z_0,z_1,\dots)$ the right shift operator on $\ell^2(\bbZ_+;\bbC)$ and by $S^{-1}=S^*:\bz\mapsto(z_1,z_2,\dots)$ the left shift operator  such that $S^{-1}S=I_{\ell^2(\bbZ_+;\bbC)}$, the identity operator, while $\ran(SS^{-1})=({\rm span } \{(1,0,\dots)\})^\bot$. By the spectral mapping theorem $\spec(S-S^{-1})=[-2\i, 2\i]$ since $\spec(S)=\{\lambda\in\bbC: |\lambda|\le1\}$, and so throughout the paper we assume that $\lambda\notin[-2\i, 2\i]$ thus looking for the {\em isolated eigenvalues} of  \eqref{main}--\eqref{main-bc}.

Letting $y_n=\begin{bmatrix}z_n\\z_{n-1}\end{bmatrix}\in\bbC^2$, the problem \eqref{main}--\eqref{main-bc} is equivalent to the  following boundary value problem for the first order $(2\times 2)$-system of difference equations,
\begin{eqnarray}\label{evsys1}
y_{n+1}&=&A_n^{\times}y_n,\, n\ge0,\\
\label{evsys1-bc}
y_0&\in&\ran ( Q_+).
\end{eqnarray}
Here and throughout the paper we use the following notations,
\begin{eqnarray}\label{Anot1}
A_n^{\times}&=&A+B_nC_n:=\begin{bmatrix}b_nc_n-\lambda\,&1\\ \, 1&\, 0\end{bmatrix}\in\bbC^{2\times 2},\, n\ge0.\\
A=A(\lambda)&=&\begin{bmatrix}-\lambda&1\\1&0\end{bmatrix}, Q_+=\begin{bmatrix}1&0\\0&0\end{bmatrix},  Q_-=\begin{bmatrix}0&0\\0&1\end{bmatrix}, B_n=b_nQ_+, C_n=c_nQ_+.\label{Anot}
\end{eqnarray}
 The eigenvalues of the matrix $A(\lambda)$ solve the quadratic equation $\mu^2+\lambda\mu-1=0$, and so our standing assumption $\lambda\notin[-2{\rm i},2{\rm i}]$ is equivalent to the fact that the eigenvalues of $A(\lambda)$ are off the unit circle. 
We let $\mu_+=\mu_+(\lambda)$ and $\mu_-=\mu_-(\lambda)$ denote the roots of the equation 
$\mu^2+\lambda\mu-1=0$ satisfying the inequalities
\begin{equation}\label{muineq}
|\mu_+(\lambda)|<1<|\mu_-(\lambda)|
\end{equation}
and denote by $P_\pm=P_\pm(\lambda)$ the spectral projections for $A(\lambda)$ in $\bbC^2$ such that 
$\spec(A(\lambda)\big|_{\ran P_\pm})=\{\mu_\pm(\lambda)\}$. Finally, we let $R_+=R_+(\lambda)$ denote the projection in $\bbC^2$ onto $\ran P_+$ parallel to $\ran Q_+$, and set $R_-=I_{2\times 2}-R_+$. 

The choice of the projection $R_+$ is important for the half-line case. Indeed, the constant coefficient difference equation $y_{n+1}=Ay_n$ has exponential dichotomy on $\bbZ_+$ with the dichotomy projection $P_+$ whose range is the uniquely determined subspace of the initial values of the bounded solutions to the equation. Unlike the full line case, the exponential dichotomy on $\bbZ_+$ is not unique, but the only requirement on the dichotomy projection is that its range must be equal to the subspace $\ran P_+$. The choice of $R_+$ whose kernel is $\ran Q_+$ as the dichotomy projection will allow us to satisfy the boundary condition in 
\eqref{main-bc}, see, e.g., formula \eqref{invform} below.

We now proceed to define our first characteristic determinant associated with \eqref{main}--\eqref{main-bc}. We consider the following Birman-Schwinger-type pencil 
of operators acting in $\ell^2(\bbZ_+;\bbC)$,
\begin{equation}\label{defK}
K_\lambda^+=-\diag_{n\in\bbZ_+}\{c_n\}(S-S^{-1}-\lambda)^{-1}\diag_{n\in\bbZ_+}\{b_n\},
\end{equation}
analogous to $K_\lambda$ studied in \cite{LV24} for the full line.
 By \eqref{condbc}, the operator $K_\lambda^+$ is of trace class and so we may define our first  characteristic determinant $\det(I-K_\lambda^+)$.

Next, we re-write \eqref{evsys1} as 
$\big(S^{-1}-\diag_{n\in\bbZ_+}\{A_n^{\times}\}\big)\by=0$ for $\by=(y_n)_{n\in\bbZ_+}$ where we continue to denote by $S^{-1}$ 
the shift acting in the space of vector valued sequences. We stress that the operator $S^{-1}-\diag_{n\in\bbZ_+}\{A_n^{\times}\}$ in \eqref{evsys1}--\eqref{evsys1-bc} acts from the subspace \begin{equation}\label{defellQ}
\ell^2_{\ran(Q_+)}(\bbZ_+;\bbC^2):=\{(y_n)_{n\ge0}\in\ell^2(\bbZ_+;\bbC^2): y_0\in\ran(Q_+)\}\end{equation} into $\ell^2(\bbZ_+;\bbC^2)$ and refer to \cite{BAGK}  for  results on equivalence of invertibility of the operator %between $\ell^2_{\ran(Q_+)}(\bbZ_+;\bbC^2)$ and $\ell^2(\bbZ_+;\bbC^2)$, 
and dichotomy of the difference equation \eqref{evsys1} on the half-line. We  introduce the Birman-Schwinger-type pencil 
of operators acting in $\ell^2(\bbZ_+; \bbC^2)$,
\begin{equation}\label{defT}
\cT_\lambda^+=\diag_{n\in\bbZ_+}\{C_n\}\big(S^{-1}-\diag_{n\in\bbZ_+}\{A\}\big)^{-1}\diag_{n\in\bbZ_+}\{B_n\},
\end{equation}
analogous to $\cT_\lambda$ studied in \cite{LV24} for the full line case.
 The operator $\cT_\lambda^+$ is of trace class by \eqref{condbc} and we define our second characteristic determinant $\det(I-\cT_\lambda^+)$.

%To define our third characteristic determinant,
We now consider the $(2\times 2)$-{\em matrix valued Jost solution} $\bY^+=\bY^+(\lambda)=(Y^+_n)_{n\ge0}$, $Y^+_n\in\bbC^{2\times 2}$, cf.\ \cite{CL07, GLM07,LV24}, defined as the solution to the difference equation \eqref{evsys1} (without the boundary condition) satisfying
\begin{equation}\label{mvj}
\big\|(\mu_+)^{-n}\big(Y^+_n-A^nR_+\big)\|_{\bbC^{2\times 2}}\to0 \text{ as $n\to+\infty$ and $Y^+_0=Y^+_0R_+$.}
\end{equation}
As we will prove below, this solution is unique. Also, $Y_n^+=Y_n^+R_+$ for all $n\ge0$.
%We stress that $\bY^\pm=\bY^\pm(\lambda)$ from \eqref{mvj}  depends on $\lambda$ and 
Using the projection $R_-=R_-(\lambda)$ onto $\ran Q_+$ parallel to $\ran P_+(\lambda)$, analogously to $\cE(\lambda)$ in \cite{LV24} for the full line case, we introduce our third characteristic determinant, the {\em Evans function}, by the formula
\begin{equation}\label{defcE}
\cE^+(\lambda)=\det\big(Y_0^+(\lambda)+R_-(\lambda)\big).
\end{equation}

We call a solution $\bz^+=(z^+_n)_{n\ge-1}$ of the second order difference equation \eqref{main}  (without the boundary condition) the {\em Jost solution,}  cf.\ \cite{CS}, provided 
\begin{equation}\label{zpm-asymp}
(\mu_+)^{-n}z^+_n-1\to0 \text{ as $n\to+\infty$},
\end{equation}
and denote by $\bz^{\rm r}=(z_n^{\rm r})_{n\ge-1}$ the {\em regular} solution of the boundary value problem \eqref{main}--\eqref{main-bc} satisfying the additional boundary condition $z_0^{\rm r}=1$ (which is of course unique but not necessarily bounded at infinity).
As in the case of the discrete Schr\"odinger equations, the Jost solution $\bz^+=\bz^+(\lambda)$ is unique. Introducing the notation $\cW(\bu,\bv)_n=(-1)^{n-1}\big(u_{n-1}v_n-u_nv_{n-1}\big)$, $n\ge0$, for the Wronskian of any two sequences $\bu=(u_n)_{n\ge-1}$ and $\bv=(v_n)_{n\ge-1}$, we note that the Wronskian of the solutions to  \eqref{main}  is $n$-independent, and that $\cW(\bz^+,\bz^{\rm r})_0=z^+_{-1}(\lambda)$. We define our fourth characteristic determinant, the {\em Jost function}, analogously to $\cF$ from \cite{LV24} for the full line case,
\begin{equation}\label{defF}
\cF^+(\lambda)=\mu_+(\lambda)\cW(\bz^+(\lambda),\bz^{\rm r}(\lambda))_0,
\end{equation}
where the scaling factor $\mu_+(\lambda)$ is chosen such that $\cF^+(\lambda)=1$ provided $b_nc_n=0$ for $n\ge0$ when $z_n^+=\mu_+^n$ for $n\ge-1$.

Our fifth characteristic determinant will be defined only for the difference equation \eqref{eulerev1} assuming \eqref{condrho} and uses continued fractions; we introduce the notation 
\begin{align}\lb{deffg}
g_+(\lambda)=
\cfrac{1}{
\frac{\lambda}{\rho_{1}}+\cfrac{1}{
\frac{\lambda}{\rho_{2}}+\dots}}.\,\,
%g_-(\lambda)=\cfrac{1}{
%\frac{\lambda}{\rho_{-1}}+\cfrac{1}{
%\frac{\lambda}{\rho_{-2}}+\dots}}.
\end{align}
The continued fraction converges for $\re(\lambda)>0$ and $|\arg(\lambda)|\le \pi/2-\delta$ for any $\delta\in(0,\pi/2)$ by the classical Van Vleck Theorem \cite[Theorem 4.29]{JT} and we may now define our fifth function of interest by the formula, cf.\  $\cG$ from \cite{LV24},
\begin{equation}\label{defG}
\cG^+(\lambda)= \mu_+(\lambda)\, z_0^+(\lambda)\,\big(g_+(\lambda)+\lambda/\rho_0\big),
\end{equation}
where $z^+_0(\lambda)$ is the $0$-th entry of the Jost solution, $\mu_+(\lambda)$ is the eigenvalue of $A(\lambda)$ inside of the unit disk, and  $g_+(\lambda)$ is defined in \eqref{deffg},
and we assume that $\re(\lambda)>0$.

We are ready to formulate the main results of this paper.

\begin{theorem}\label{thm:main}
 Assume $\lambda\notin[-2{\rm i}, 2 {\rm i}]$ and that the sequences in \eqref{condbc} depend on $\lambda$ holomorphically. The functions introduced in \eqref{defK}, \eqref{defT}, \eqref{defcE} and \eqref{defF} are  holomorphic in $\lambda$ and equal, 
\begin{equation}\label{bigfive}
\det(I-K_\lambda^+)=\det(I-\cT_\lambda^+)=\cE^+(\lambda)=\cF^+(\lambda)).
\end{equation}
%where the last equality holds under the additional assumption $\re(\lambda)>0$.
As a result,  $\lambda$ is a simple discrete eigenvalue of \eqref{main}--\eqref{main-bc} if and only if $\lambda$ is a zero of each of the functions in \eqref{bigfive}.\end{theorem}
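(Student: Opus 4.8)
The plan is to establish the four-fold equality \eqref{bigfive} as a chain of three identities, in the order the functions are listed, and then to read off the spectral characterization from the first function via a Birman--Schwinger argument. Holomorphy I would dispose of first: for $\lambda\notin[-2{\rm i},2{\rm i}]$ both resolvents $(S-S^{-1}-\lambda)^{-1}$ and $\big(S^{-1}-\diag_{n}\{A\}\big)^{-1}$ are holomorphic operator-valued maps, the sequences $b_n,c_n$ are holomorphic by hypothesis, so $K_\lambda^+$ and $\cT_\lambda^+$ in \eqref{defK}, \eqref{defT} are holomorphic families of trace-class operators and their Fredholm determinants are holomorphic. Since the discriminant $\lambda^2+4$ is nonzero off $[-2{\rm i},2{\rm i}]$, the roots $\mu_\pm$ in \eqref{muineq}, the spectral projections $P_\pm$, and hence $R_\pm$ vary holomorphically; the matrix Jost solution $\bY^+$ of \eqref{mvj} and the scalar Jost and regular solutions $\bz^+,\bz^{\rm r}$ inherit holomorphy from the locally uniformly convergent iteration that produces them, so $\cE^+$ and $\cF^+$ are holomorphic as well.

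For the two algebraic identities I would argue as follows. Since $B_n=b_nQ_+$ and $C_n=c_nQ_+$, the pencil $\cT_\lambda^+$ in \eqref{defT} only feels the $Q_+(\,\cdot\,)Q_+$ block of the free system resolvent; computing that block explicitly amounts to undoing the reduction of the scalar second-order equation \eqref{main} to the first-order system, and identifies it (up to the scalar factors $b_n,c_n$) with $(S-S^{-1}-\lambda)^{-1}$. Thus $\cT_\lambda^+$ is supported on the copy of $\ell^2(\bbZ_+;\bbC)$ cut out by $\diag_n\{Q_+\}$, where it coincides with $K_\lambda^+$, giving $\det(I-K_\lambda^+)=\det(I-\cT_\lambda^+)$. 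For $\cE^+=\cF^+$ I would use that $Y_0^+=Y_0^+R_+$ forces the first column of $Y_0^+$ (the component in $\ran Q_+$) to vanish, while $R_-$ is the projection onto $\ran Q_+$ along $\ran P_+$; expanding the $2\times2$ determinant in \eqref{defcE} column by column and recalling that the columns of $\bY^+$ encode the boundary data $(z_0,z_{-1})$ of the scalar solutions then reproduces $\mu_+\cW(\bz^+,\bz^{\rm r})_0$, the factor $\mu_+$ being exactly the normalization that makes both sides equal $1$ in the free case $b_nc_n\equiv0$.

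The heart of the matter is the middle identity $\det(I-\cT_\lambda^+)=\cE^+(\lambda)$, and this is where I expect the real work. The plan is to adapt to the half-line the Fredholm-determinant-to-finite-determinant mechanism of \cite{CL07,LV24}: build the Green's function of the free operator $S^{-1}-\diag_n\{A\}$ from the dichotomy projection $R_+$, express the resolvent of the perturbed operator $S^{-1}-\diag_n\{A_n^\times\}$ on the constrained space $\ell^2_{\ran(Q_+)}(\bbZ_+;\bbC^2)$ of \eqref{defellQ} through the matrix Jost solution $\bY^+$, and then invoke a general perturbation-determinant formula to collapse the infinite-dimensional $\det(I-\cT_\lambda^+)$ to the $2\times2$ boundary-comparison determinant $\det\big(Y_0^+ + R_-\big)=\cE^+$. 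The hard part will be setting up the correct half-line Green's function with the nonstandard dichotomy projection $R_+$ (whose kernel is $\ran Q_+$, chosen precisely to honor the boundary condition \eqref{main-bc}) and controlling the trace-class bookkeeping so that the determinant reduction is legitimate; this is exactly the point at which the half-line problem departs from the full-line treatment of \cite{LV24} and forces one to reprove the half-line analogs of the \cite{CL07} results.

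Finally, for the spectral statement I would run the Birman--Schwinger principle on the first function. If $\bz\in\ell^2(\bbZ_+;\bbC)$ is a nontrivial solution of \eqref{main}--\eqref{main-bc}, then $\varphi:=\diag_n\{c_n\}\bz$ satisfies $(I-K_\lambda^+)\varphi=0$; conversely any $\varphi\in\ker(I-K_\lambda^+)$ yields the nontrivial $\ell^2$ solution $\bz=-(S-S^{-1}-\lambda)^{-1}\diag_n\{b_n\}\varphi$, and since $K_\lambda^+$ is trace class this existence is equivalent to $\det(I-K_\lambda^+)=0$. That every eigenvalue is automatically simple I would get from geometry: because $\lambda\notin[-2{\rm i},2{\rm i}]$ the stable subspace $\ran P_+(\lambda)$ is one-dimensional, so the decaying ($\ell^2$) solutions of \eqref{main} form a one-dimensional space, and intersecting it with the boundary constraint $y_0\in\ran Q_+$ leaves an eigenspace of dimension at most one. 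Combining this with the equality \eqref{bigfive} proved above gives the stated biconditional: $\lambda$ is a (necessarily simple) discrete eigenvalue precisely when it is a common zero of the four characteristic determinants.
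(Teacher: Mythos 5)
Your proposal is correct and follows essentially the same route as the paper: the same chain of three identities (identifying $K_\lambda^+$ as the nonzero block of $\cT_\lambda^+$ in the $\diag\{Q_\pm\}$ decomposition, a semi-separable/perturbation-determinant reduction of $\det(I-\cT_\lambda^+)$ to the $2\times 2$ boundary determinant $\det(Y_0^++R_-)$, and reading off $\cE^+=\cF^+$ from the column structure of $\bY^+$), followed by the Birman--Schwinger principle and the rank-one-dichotomy argument for simplicity. The only caveat is that the two steps you leave as plans are exactly where the paper does its work: the middle identity is executed there by first reducing to finitely supported $(B_n),(C_n)$, then writing $\cT_\lambda^+=H_++H_2H_3$ with $\det(I-H_+)=1$ (diagonal part of determinant one plus a lower-triangular part) and $H_2,H_3$ of rank one, and finally identifying $H_3(I-H_+)^{-1}H_2$ via the Volterra equation \eqref{VE}; and in $\cE^+=\cF^+$ the identification $z_n=\mu_+z_n^+$ of the scalar solution sitting inside $\bY^+$ requires the $W$-conjugated Volterra comparison and uniqueness of the Jost solution (or the asymptotics \eqref{mvj}), not merely the free-case normalization check you invoke.
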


\begin{corollary}\label{cor:main}
Assume $\re(\lambda)>0$ and \eqref{condrho}, and  consider the difference equation \eqref{eulerev1}. Define $(b_n)$ and $(c_n)$ by \eqref{defbc} and use the sequences to construct all four characteristic determinants in \eqref{bigfive}.
Then $\cG^+(\lambda)$ defined in \eqref{defG} via the continued fraction is equal to the functions in \eqref{bigfive}.
\end{corollary}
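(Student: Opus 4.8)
The plan is to lean on Theorem~\ref{thm:main}, which has already identified the four determinants in \eqref{bigfive}; it then suffices to match $\cG^+(\lambda)$ with any one of them, and the Jost function $\cF^+(\lambda)$ is the most convenient target. Using $\cW(\bz^+,\bz^{\rm r})_0=z^+_{-1}(\lambda)$ we have $\cF^+(\lambda)=\mu_+(\lambda)\,z^+_{-1}(\lambda)$, while by \eqref{defG} we have $\cG^+(\lambda)=\mu_+(\lambda)\,z_0^+(\lambda)\big(g_+(\lambda)+\lambda/\rho_0\big)$. Cancelling the common factor $\mu_+(\lambda)$, the identity $\cF^+=\cG^+$ becomes the scalar statement $z^+_{-1}=z_0^+\big(g_+(\lambda)+\lambda/\rho_0\big)$. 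Evaluating \eqref{eulerev1} at $n=0$ gives $z^+_{-1}-z^+_1=\lambda z_0^+/\rho_0$, so the claim reduces to showing that the continued fraction computes the first ratio of the Jost solution, namely $z^+_1/z^+_0=g_+(\lambda)$. (Where Jost entries vanish I would argue at nearby points and extend by holomorphy on $\re(\lambda)>0$, both functions being analytic there.)

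To establish the ratio identity, I set $w_n=z^+_{n+1}/z^+_n$ and divide \eqref{eulerev1} by $z^+_n$ for $n\ge1$, obtaining $1/w_{n-1}=\lambda/\rho_n+w_n$, that is,
\begin{equation}\label{wrec}
w_{n-1}=\cfrac{1}{\frac{\lambda}{\rho_n}+w_n},\qquad n\ge1.
\end{equation}
Iterating \eqref{wrec} from $n=1$ down to $n=N$ writes $w_0$ as the finite continued fraction in $\lambda/\rho_1,\dots,\lambda/\rho_N$ with the value $w_N$ inserted as its tail, whereas the $N$-th convergent $g_+^{(N)}$ of \eqref{deffg} is the same finite continued fraction with tail $0$. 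The Jost asymptotics \eqref{zpm-asymp} give $z^+_n\sim(\mu_+)^n$, hence $w_n\to\mu_+(\lambda)$; in particular $(w_N)_{N\ge1}$ is bounded, consistently with the limit of \eqref{wrec} (using $\rho_n\to1$), which reads $1/\mu_+=\lambda+\mu_+$, i.e.\ the defining relation $\mu_+^2+\lambda\mu_+-1=0$.

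It then remains to prove $w_0=\lim_{N\to\infty}g_+^{(N)}=g_+(\lambda)$. Writing the $N$-th convergent as $A_N/B_N$, where $(A_N,B_N)$ obey the standard convergent recurrence for \eqref{deffg}, the Möbius form of the iterated \eqref{wrec} is $w_0=(A_N+w_N A_{N-1})/(B_N+w_N B_{N-1})$, so the determinant identity $A_{N-1}B_N-A_NB_{N-1}=\pm1$ yields
\begin{equation}\label{taildiff}
w_0-g_+^{(N)}=w_0-\frac{A_N}{B_N}=\frac{\pm\,w_N}{B_N\big(B_N+w_N B_{N-1}\big)}.
\end{equation}
Since $\rho_n>0$ for $n\ge1$, each $\lambda/\rho_n$ lies in the sector $|\arg(\lambda/\rho_n)|\le\pi/2-\delta$, so the Van Vleck Theorem guarantees convergence of \eqref{deffg} and, with it, the growth of the denominators $B_N$ that forces the right-hand side of \eqref{taildiff} to $0$ as $N\to\infty$, the numerator $w_N$ being bounded. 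Letting $N\to\infty$ gives $w_0=g_+(\lambda)$, hence $\cG^+=\cF^+$, and by Theorem~\ref{thm:main} equality with all functions in \eqref{bigfive}.

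The main obstacle is exactly the tail estimate \eqref{taildiff}: one must verify that replacing the true tail $w_N$ by $0$ does not affect the limit. Conceptually this is Pincherle's theorem—the Jost solution is the minimal (recessive) solution of the three-term recurrence underlying \eqref{deffg}, after the alternating-sign normalization $z^+_n\mapsto(-1)^nz^+_n$ that turns \eqref{eulerev1} into the canonical convergent recurrence, and a convergent continued fraction reproduces the ratios of its minimal solution. Making the denominator growth in \eqref{taildiff} quantitative directly from the Van Vleck sector condition, rather than merely invoking convergence, is the one place where genuine care is needed; the bound $w_N\to\mu_+$ with $|\mu_+|<1$ keeps the numerator harmless throughout.
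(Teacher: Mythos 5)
Your overall reduction is the same as the paper's: both proofs invoke Theorem~\ref{thm:main} to reduce the corollary to the single identity $\cF^+=\cG^+$, both use the $n=0$ case of \eqref{eulerev1} to strip off the term $\lambda/\rho_0$, and both identify the continued fraction \eqref{deffg} with a ratio of consecutive Jost entries via the ratio recurrence (the paper's $v_n^+=z^+_{n-1}/z^+_n$ is exactly $1/w_{n-1}$ in your notation). Where you genuinely diverge is in how this identification is justified. The paper first observes that $\cF^+$ is holomorphic (Remark~\ref{holF}) and that the continued fraction is holomorphic by the Stieltjes--Vitali theorem \cite[Theorem 4.30]{JT}, reduces to real $\lambda>0$, quotes Remark~\ref{rem:lem2.3} for $z_n^+\neq0$, and then cites the monotonicity argument of \cite[pp. 2063]{DLMVW20} for the even/odd truncations to get $v_0^+=g_+(\lambda)+\lambda/\rho_0$. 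You instead stay on the whole sector $\re(\lambda)>0$ and argue through convergent/M\"obius algebra. If completed, your route is in some ways stronger: it needs neither the restriction to real $\lambda$ nor the analytic-continuation step, and it can deliver the nonvanishing of the $z_n^+$ as a byproduct rather than as an input imported from \cite{LV24}.

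However, as written your key estimate has a genuine gap, which you only half-acknowledge. Convergence of the continued fraction (Van Vleck) does \emph{not} ``force the growth of the denominators'' in the form you need: what convergence gives is $A_N/B_N-A_{N-1}/B_{N-1}=\pm 1/(B_NB_{N-1})\to0$, i.e. $|B_NB_{N-1}|\to\infty$, and it gives no lower bound at all on the second factor $B_N+w_NB_{N-1}$ of your denominator, which vanishes precisely when the true tail $w_N$ equals $-B_N/B_{N-1}$ and a priori could be arbitrarily small. The clean fix is exactly the Pincherle/minimal-solution mechanism you name but do not carry out. Concretely: set $m_n:=(-1)^{n+1}z^+_{n+1}$; this solves the convergent recurrence $X_N=(\lambda/\rho_N)X_{N-1}+X_{N-2}$, so the Casoratian $D_N:=B_Nm_{N-1}-B_{N-1}m_N$ satisfies $D_N=-D_{N-1}$, whence $D_N=(-1)^N z_0^+$ (using $B_{-1}=0$, $B_0=1$), and therefore
\begin{equation*}
B_N+w_NB_{N-1}=\frac{D_N}{m_{N-1}}=\frac{z_0^+}{z_N^+},
\qquad
w_0-\frac{A_N}{B_N}=\pm\,\frac{z^+_{N+1}}{z_0^+\,B_N}.
\end{equation*}
Moreover $|D_N|=|z_0^+|$ gives $|B_N|\,|z_N^+|+|B_{N-1}|\,|z^+_{N+1}|\ge|z_0^+|$, so, since $|z^+_N|\lesssim|\mu_+|^N$, for infinitely many $N$ one has $|B_N|\gtrsim|\mu_+|^{-N}$; along such a subsequence the right-hand side above is $O(|\mu_+|^{2N+1})\to0$, and since $A_N/B_N\to g_+(\lambda)$ along the full sequence this already yields $w_0=g_+(\lambda)$. (Alternatively, quote Pincherle's theorem verbatim: the recurrence has the minimal solution $m$, hence the continued fraction converges to $-m_0/m_{-1}=z_1^+/z_0^+$.) Either way the hole closes, but some such argument --- or the paper's monotonicity-plus-holomorphy route --- must be supplied, since ``Van Vleck convergence $\Rightarrow$ tail estimate'' is not a valid inference. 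A final small point: once this is in place, your fallback for vanishing Jost entries (perturbing $\lambda$ and invoking holomorphy) becomes unnecessary, because $z_N^+=0$ for some $N\ge0$ would force the corresponding tail of \eqref{deffg} to diverge, contradicting its convergence on the sector.
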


We refer to \cite{LV24} for a detailed discussion of the literature related to the results, and mention here only the following. Regarding the Birman-Schwinger operator pencils see \cite{BtEG,CGLNSZ}. The papers most relevant to the current setup are \cite{CL07,GLM07} and \cite{LV18}. 
We are not aware of any literature on the Birman-Schwinger type pencils specific for the first order  systems \eqref{main} on the half-line. For the full line case they have been studied in \cite{CL07} which is a companion paper to \cite{GLM07} dealing with differential equations where, among many other things, %. To a large extent these papers are based on the theory of integral operators with semi-separable kernels \cite{GM04,GGK90}. In particular, among other things, 
 the first equality in \eqref{bigfive} was proved for the case of the Schr\"odinger differential operators. We are not aware of results of this sort for the difference equations of type \eqref{main} on the half-line. This equation is of course a particular case of eigenvalue problems for three diagonal (Jacobi) matrices, cf.\  \cite{LG21,JN98} and also \cite{BS21,EG05,FLW23,G22,KLS22,SS13}. The dichotomies on the half-line were studied in great detail, see \cite{BAG, BAGK93,BAGK} and the references therein.
Regarding  the Evans function we refer to \cite{KP,S} where one can find many other sources. The most relevant papers are again \cite{CL07,GLM07}, where, in particular, the second equality in \eqref{bigfive} has been proved for quite general first order systems of difference and differential equations but on the full line, and so the results in the current paper for the half-line case seem to be new. The Evans function is a relatively new topic in the study of the 2D-Euler-related difference equations as in \eqref{eulerev1}; we are aware of only
\cite{DM24, LV24}. The Jost solutions  are classical  \cite{CS} and so is the equality of $\det(I-K_\lambda^+)$ and $\cF^+$ in \eqref{bigfive} for the Schr\"odinger case, see \cite{JP51,Ne72} and generalizations in  \cite{GMZ07,GN15}; however, it is quite possible that the use of the Jost solutions as well as the equality of $\cF^+$ and $\cE^+$ and $\det(I-K_\lambda^+)$ in the context of \eqref{main}, see \cite{LV24}, appear to be new for the half-line case. Regarding continued fractions in this context see
\cite{DLMVW20,FH98,JT,MS, SV21}.

We now briefly discuss connections to the Euler equation again referring to \cite{DLMVW20,LV24} for more details and
%We now address in some details the connections of Theorem \ref{thm:main} to hydrodynamic instabilities. 
%Indeed, although \eqref{eulerev1} looks very particular, the equations of this type are important as they arise in stability issues for the steady states solutions of many equations of hydrodynamics. Leaving the ``many'' part of the last sentence to a forthcoming paper, we now explain how \eqref{eulerev1} appears in 
concentrating on just one particularly important case of stability of the so-called unidirectional, or generalized Kolmogorov, flow for the Euler equations of ideal fluid on the two dimensional torus $\bbT^2$,
\begin{equation}\label{e}
\partial_{t} \Omega + U \cdot \nabla \Omega = 0,
\, \div U=0, \, \Omega=\curl U, \,  \bx=(x_1,x_2)\in\bbT^2,
\end{equation}
where the two-dimensional vector $U=U(\bx)$ is the velocity and the scalar $\Omega$ is the vorticity of the fluid. 
The unidirectional (or generalized Kolmogorov) flow is the steady state solution to the Euler equations on $\mathbb T^{2}$ of the form
\begin{equation}\label{uni}
\Omega^{0}(\mathbf x) = \alpha e^{i \mathbf p \cdot \mathbf x}/2+ \alpha e^{-i \mathbf p \cdot \mathbf x}/2 = \alpha \cos (\mathbf p \cdot \mathbf x),
\end{equation}
where $\mathbf p \in \mathbb Z^{2} \backslash \{0\}$ is a given vector
 and $\alpha\in\bbR$.
In particular, $\bp=(m,0)\in\bbZ^2$ for $m\in\bbN$ corresponds to the classical Kolmogorov flow.
% One can compute that $\psi^{0}(\mathbf x)= \frac{\Gamma}{\|\mathbf p\|^{2}}\cos (\mathbf p \cdot \mathbf x)$ and 
%\begin{equation}
%\mathbf u^{0}(\mathbf x)= (\psi^{0}_{y},-\psi^{0}_{x}) =\frac{\Gamma}{\|\mathbf p\|^{2}}\sin (\mathbf p \cdot \mathbf x) \mathbf p^{\perp}, 
%\end{equation}
%where $\mathbf p^{\perp}=(-p_{2},p_{1})$. This means that $\omega^{0}= \|\mathbf p\|^{2} \psi^{0}$, i.e., if we assume that the steady state satisfies $\omega^{0}=g(\psi^{0})$, then $g' = \|\mathbf p\|^{2}$. 
It is thus a classical problem to study (linear) stability of the flow given by \eqref{uni}. To this end, using Fourier series $\Omega(\bx)=\sum_{\bk\in\bbZ^2\setminus\{0\}}\omega_\bk {\rm e}^{{\rm i} \bk\cdot\bx}$ for vorticity, we rewrite \eqref{e} as a system of nonlinear equations for $\omega_\bk$, $\bk\in\bbZ^2$ as in \cite{DLMVW20,LLS,LV24,L}. Linearizing this system about the unidirectional flow, one obtains the following operator in $\ell^2(\mathbb Z^2;\bbC)$,
\begin{align}
L&: (\omega_\mathbf k)_{\mathbf k\in\mathbb Z^2}\mapsto
\big(\alpha\beta(\mathbf p,\mathbf k-\mathbf p)\omega_{\mathbf k-\mathbf p}-
\alpha\beta(\mathbf p,\mathbf k+\mathbf p) \omega_{\mathbf k+\mathbf p}\big)_{\mathbf k\in\mathbb Z^2},\label{dfnLB}
\end{align}
where the coefficients $\beta(\mathbf p,\mathbf q)$ for $\mathbf p=(p_1,p_2)$, $\mathbf q=(q_1,q_2)\in\mathbb Z^2$ are defined as 
\begin{equation}\label{dfnalpha}
\beta(\mathbf p,\mathbf q)=\frac{1}{2}\big(\|\mathbf q\|^{-2}-\|\mathbf p\|^{-2}\big)(\mathbf p\wedge\mathbf q),\,\text{ with } \mathbf p\wedge\mathbf q:=\det\left[\begin{smallmatrix}p_1&q_1\\ p_2&q_2\end{smallmatrix}\right],
\end{equation}
for %\footnote{JW: removed case $ \bp\neq\pm\bq,$ here; not needed, already zero?} 
$\mathbf p\neq0,\mathbf q\neq0$, and $\beta(\mathbf p,\mathbf q)=0$ otherwise.  
% Here 
%\begin{equation}\label{dfnwedge}
%\mathbf p\wedge\mathbf q=\det\left[\begin{smallmatrix}p_1&q_1\\ p_2&q_2\end{smallmatrix}\right] \mbox{ for } \mathbf p=(p_1,p_2) \mbox{ and } \mathbf q=(q_1,q_2).
%\end{equation}
The flow \eqref{uni} is called (linearly) unstable if the operator $L$ has
 nonimaginary spectrum. We refer to \cite{LV19, ZL04} for results on stability and instability for the 2D Euler equations and related models.
 
To study the spectrum of $L$, (see the discussion in \cite[pp. 2054-2057]{DLMVW20}; see also \cite{LLS,LV24}) we decompose this operator 
%acting in $\ell^2(\mathbb Z^2;\bbC)$
 into a sum of operators $L_{\mathbf q}$, $\mathbf q\in\mathbb Z^2$, acting in the space $\ell^2(\mathbb Z;\bbC)$, by ``slicing'' the grid $\bbZ^2$ along lines parallel to $\bp$ such that 
 $\spec(L)=\cup_\bq\spec(L_\bq)$, where %Indeed, for each $\mathbf q\in\mathbb Z^2$ the operator $L$ from \eqref{dfnLB} leaves invariant the subspace of sequences $(\omega_\bk)_{\bk\in\bbZ^2}$ supported on the set $B_\bq=\{\bk=\mathbf q+n\bp: n\in\bbZ\}\subset\bbZ^2$.  The restriction of $L$ to the subspace gives rise to the operator $L_\bq$ defined in $\ell^2(\mathbb Z;\bbC)$ as follows,
\begin{equation}
L_{\mathbf q}: (w_n)\mapsto
\big(\alpha\beta(\mathbf p,\mathbf q+(n-1)\mathbf p) w_{n-1}-
\alpha\beta(\mathbf p,\mathbf q+(n+1)\mathbf p) w_{n+1}\big),\, n\in\mathbb Z,\label{dfnLB1}
\end{equation}
and for $\mathbf k=\mathbf q+n\mathbf p$ from \eqref{dfnLB} we denote $w_n=\omega_{\mathbf q+n\mathbf p}$. In other words,
 $L_\bq=\alpha(S-S^*)\diag_{n\in\bbZ}\{\beta(\bp, \bq+n\bp)\}$.
By \eqref{dfnalpha}, if $\mathbf q\parallel\bp$ then $L_{\mathbf q}=0$ and thus we assume throughout that $\mathbf q\nparallel\mathbf p$. Moreover, since  $L_\bq$ contains a scalar multiple $\alpha\in\bbR$, with no loss of generality we may rescale this operator or, equivalently, will assume the normalization condition
$\alpha(\bq\wedge\bp)\|\bp\|^{-2}/2=1$. 

The operators $L_\bq$ are classified based on the location of the line $B_{\bq}=\{\bq+n\bp: n\in\bbZ\}$ through the point $\bq\in\bbZ^2$ relative to the disc of radius $\|\bp\|$ centered at zero, see, again, the discussion in \cite[pp. 2054-2057]{DLMVW20}; see also \cite{LLS,LV24}. Spectral properties of the operators drastically depend on the location. For instance, if none of the vectors $\bq+n\bp$, $n\in\bbZ$, is located inside the open disc then $L_\bq$ has no unstable eigenvalues \cite{LLS,L}.  In the current paper we consider only the case when 
$\bp$ and $\bq$ are such that 
\begin{equation}\label{condpq}
\|\bq\|<\|\bp\|,\ \|\bq-\bp\|=\|\bp\| \text{ and  $\|\bq+n\bp\|>\|\bp\|$ for all 
$n\in\bbZ\setminus\{-1,0\}$;}
\end{equation} 
 a typical example of this is $\bp=(3,1)$, $\bq=(2,-2)$. This assumption corresponds to the case $I_-$ described in \cite{DLMVW20}, and we refer to this paper for a discussion regarding other possible cases. The case $I_+$ when $-1$ in \eqref{condpq} is replaced by $+1$ can be treated similarly to $I_-$ while the case $I_0$ when  $\|\bq+n\bp\|>\|\bp\|$ for all $n\neq0$ has been considered in \cite{LV24}. Geometrically, the case $I_-$ and $I_+$ occur when one of the points in $B_\bq$ is inside of the open disc of radius $\|\bp\|$ and one of the points in $B_\bq$ is on the boundary of the disk while in case $I_0$ one point of $B_\bq$ is inside of the open disk while all others are outside of the closed disk. An interesting open question is to describe the spectrum of $L_\bq$ in the case $II$ when two points of $B_\bq$ are located inside of the open disk of radius $\|\bp\|$ centered at zero.

Assuming that we are in the case $I_-$, that is, that \eqref{condpq} holds, we introduce the sequence 
$\rho_n=1-\|\bp\|^2\|\bq+n\bp\|^{-2}$, $n\in\bbZ$, such that the operator $L_\bq$ in \eqref{dfnLB1} reads  $L_\bq=(S-S^*)\diag_{n\in\bbZ}\{\rho_n\}$, see \eqref{dfnalpha}. 
The eigenvalue equation for $L_\bq$ is
\begin{equation}\label{weq}
\rho_{n-1}w_{n-1}-\rho_{n+1}w_{n+1}=\lambda w_n, \, n\in\bbZ,\, \text{ with }\,(w_n)_{n\in\bbZ}\in\ell^2(\bbZ;\bbC).
\end{equation}
By \eqref{condpq} we have $\rho_{-1}=0$, $\rho_0<0$,  and $\rho_n\in(0,1)$ for all $n\in\bbZ\setminus\{0,-1\}$. In particular, the sequence $(\rho_n)_{n\ge0}$ satisfies \eqref{condrho}. Now Theorem \ref{thm:main} and Corollary \ref{cor:main} can be applied because the solution of \eqref{weq} must be supported on $\bbZ_+$ as seen in Lemma \ref{lem:hl} given in the next section.
Theorem \ref{thm:main} implies the following.
  \begin{corollary}\label{cor:main:eul}
 Assume that a given $\bp\in\bbZ^2$ is such that there exists a $\bq\in\bbZ^2$ satisfying
 \eqref{condpq}. Then the eigenvalues of $L_\bq$ with positive real parts are in one-to-one  correspondence with  zeros of each of the five functions in \eqref{bigfive} and \eqref{defG}. Moreover,  the operator $L_\bq$ from \eqref{dfnLB1}, and thus $L$ from \eqref{dfnLB}, has a positive eigenvalue. As a result, the unidirectional flow \eqref{uni} is linearly unstable.
 \end{corollary}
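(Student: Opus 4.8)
The plan is to read the whole statement off Theorem~\ref{thm:main} and Corollary~\ref{cor:main}, once the eigenvalue equation \eqref{weq} is reduced to the half-line problem, and then to force a positive zero of the fifth function by a sign-change argument for the continued fraction. First, for the correspondence: let $\lambda$ be an eigenvalue of $L_\bq$ with $\re(\lambda)>0$ and eigenvector $(w_n)_{n\in\bbZ}\in\ell^2(\bbZ;\bbC)$. By \eqref{condpq} we have $\rho_{-1}=0$, so Lemma~\ref{lem:hl} forces $w_n=0$ for $n<0$; thus $(w_n)_{n\ge0}$ is a nontrivial $\ell^2(\bbZ_+;\bbC)$ solution of \eqref{eulerev1}, and conversely any such solution extends by zero to an eigenvector of $L_\bq$. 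Since $\re(\lambda)>0$ gives $\lambda\notin[-2\i,2\i]$, the substitution \eqref{defbc} turns \eqref{eulerev1} into \eqref{main}--\eqref{main-bc}, and Theorem~\ref{thm:main} identifies these (simple, discrete) eigenvalues with the common zeros of the four functions in \eqref{bigfive}; Corollary~\ref{cor:main}, applicable because $\re(\lambda)>0$ and $(\rho_n)_{n\ge0}$ satisfies \eqref{condrho}, adjoins $\cG^+$ of \eqref{defG} to this list. This is the asserted one-to-one correspondence.

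Next, for the existence of a positive eigenvalue, observe from the factorization in \eqref{defG} that any real $\lambda>0$ with $g_+(\lambda)+\lambda/\rho_0=0$ satisfies $\cG^+(\lambda)=0$ (the prefactor $\mu_+(\lambda)z_0^+(\lambda)$ is immaterial for producing the zero) and hence, by the correspondence, is an eigenvalue of $L_\bq$. I therefore seek a positive real zero of $h(\lambda):=g_+(\lambda)+\lambda/\rho_0$. For $\lambda>0$ all partial denominators $\lambda/\rho_n$ in \eqref{deffg} are positive, so $g_+(\lambda)$ is real, positive, and, by the Van~Vleck convergence, continuous on $(0,\infty)$. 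As $\lambda\to+\infty$ one has $g_+(\lambda)\to0$ while $\lambda/\rho_0\to-\infty$ since $\rho_0<0$, so $h(\lambda)\to-\infty$; as $\lambda\to0^+$ I claim $g_+(\lambda)\to1$, whence $h(\lambda)\to1>0$. The intermediate value theorem then yields $\lambda_0\in(0,\infty)$ with $h(\lambda_0)=0$, i.e.\ a positive eigenvalue $\lambda_0$ of $L_\bq$.

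The main difficulty is precisely the degenerate limit $g_+(0^+)=1$: at $\lambda=0$ the fraction \eqref{deffg} is of the indeterminate form $1/(0+1/(0+\cdots))$, and the Van~Vleck theorem only guarantees convergence in the open right half-plane. I would control this through the tail recursion $t_k(\lambda/\rho_k+t_{k+1})=1$, where $t_k$ denotes the continued fraction \eqref{deffg} started at index $k$, so that $g_+=t_1$. Because $\rho_k=1+O(1/k^2)\to1$, the tails are driven by the autonomous fixed-point equation $t^2+\lambda t-1=0$, whose root in $(0,1)$ is exactly $\mu_+(\lambda)=\tfrac12\big(-\lambda+\sqrt{\lambda^2+4}\big)\to1$ as $\lambda\to0^+$. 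Comparing $g_+(\lambda)$ with $\mu_+(\lambda)$ and using the summability of $\rho_n-1$ to bound the accumulated corrections should give $g_+(\lambda)\to1$ as $\lambda\to0^+$, uniformly enough for continuity up to the boundary point; this comparison with the constant-coefficient tail is the technical heart of the argument.

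Finally, for instability, since $\spec(L)=\bigcup_\bq\spec(L_\bq)$ and $\lambda_0>0$ lies in $\spec(L_\bq)$, the operator $L$ of \eqref{dfnLB} has the positive eigenvalue $\lambda_0$; in particular its spectrum is nonimaginary, so by definition the unidirectional flow \eqref{uni} is linearly unstable.
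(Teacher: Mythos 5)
Your proposal is correct and follows essentially the same route as the paper: Lemma~\ref{lem:hl} combined with Theorem~\ref{thm:main} and Corollary~\ref{cor:main} gives the one-to-one correspondence, a sign-change (intermediate value) argument for $g_+(\lambda)+\lambda/\rho_0$ on $(0,\infty)$ produces the positive eigenvalue, and $\spec(L)=\cup_\bq\spec(L_\bq)$ yields instability. The only difference is that the limits $\lim_{\lambda\to0^+}g_+(\lambda)=1$ and $\lim_{\lambda\to+\infty}g_+(\lambda)=0$, which you single out as the technical heart and sketch via a tail-recursion comparison with $\mu_+(\lambda)$, are in the paper simply taken from \cite[Lemma 2.10(4)]{DLMVW20}, so that step need not be reproved from scratch.
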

The instability of the unidirectional flow in the current setting has been established in \cite[Theorem 2.9]{DLMVW20}. Nevertheless, the first assertion in Corollary \ref{cor:main:eul}  is an improvement of the part of \cite[Theorem 2.9]{DLMVW20} where the correspondence was established between only the {\em positive} roots of the function $g_+(\lambda)+\lambda/\rho_0$ and the {\em positive} eigenvalues of $L_\bq$ but under the additional assumption that the respective eigensequences satisfy some special property \cite[Property 2.8]{DLMVW20}. By applying Theorem \ref{thm:main} in the proof of Corollary \ref{cor:main:eul} we were able to show that this assumption is redundant.
 
We conclude this section with references on the literature on stability of unidirectional flows. This topic is quite classical and well-studied, and we refer to \cite{BW,BFY99,FZ98,FH98,FVY00,MS}. The setup used herein was also used in many papers \cite{WDM1, WDM2, LLS, L, SV21}, but the closest to the current work is \cite{DLMVW20,LV24}. We mention also \cite{LV18} regarding connections to the Birman-Schwinger operators. Finally, connections between the Evans function and the linearization of the 2D Euler equation has been studied in a recent important paper \cite{DM24}.

\section{Proofs}

We begin with several general comments regarding the objects introduced in the previous section. For a start, 
 it is sometimes convenient to diagonalize the matrix $A$ from \eqref{Anot}. To this end we introduce matrices 
\begin{equation}\label{winv1}
W=\begin{bmatrix}\mu_+&\mu_-\\1&1\end{bmatrix},\,
W^{-1}=(\mu_+-\mu_-)^{-1}\begin{bmatrix}1&-\mu_-\\-1&\mu_+\end{bmatrix},\,
\widetilde{A}=\diag\{\mu_+,\mu_-\},%\, M=\frac1{\mu_+-\mu_-}\begin{bmatrix}\mu_+&\mu_-\\-\mu_+&-\mu_-\end{bmatrix}\label{winv} 
\end{equation}
so that one has, cf.\ \eqref{Anot},
\begin{equation}\label{AA}
W^{-1}AW=\widetilde{A},\, W^{-1}P_\pm W=Q_\pm.% \,
%W^{-1}Q_+W=M.
\end{equation}
We will use below the explicit formulas %for the projections
\begin{equation}\label{formRpm}
\overfullrule=0pt
R_+=\begin{bmatrix} 0&\mu_+\\0&1 \end{bmatrix},\, 
R_-=\begin{bmatrix} 1&-\mu_+\\0&0 \end{bmatrix},\, 
P_+=(\mu_+-\mu_-)^{-1}\begin{bmatrix} \mu_+&1\\1&-\mu_-\end{bmatrix},\,
A^{-1}=\begin{bmatrix}0&1\\1&\lambda\end{bmatrix}. 
\end{equation}

The operator $S-S^{-1}-\lambda$ is invertible in $\ell^2(\bbZ_+;\bbC)$ if and only if $\lambda\notin [-2\i, 2\i]$ and in this case the inverse operator $\big(S-S^{-1}-\lambda\big)^{-1}$ is given by the infinite matrix $[ a_{nk} ]_{n,k=0}^\infty$ whose entries are defined by the formula
 \begin{equation}\label{INV0}
 a_{nk}=(\mu_+-\mu_-)^{-1}\times\begin{cases}(\mu_-)^{n-k}\big(1-(\frac{\mu_+}{\mu_-})^{n+1} \big)&\text{ for $n\le k$},\\ 
 (\mu_+)^{n-k}\big(1-(\frac{\mu_+}{\mu_-})^{k+1}\big) &\text{ for $n > k$}.
\end{cases}
 \end{equation}
 One can check this either directly by multiplying the respective infinite matrices and using the equation $\mu^2+\lambda\mu-1=0$, or by using \cite[Theorem 1.1]{GF} as follows:
 we factorize $S-S^{-1}-\lambda=\mu_-(I+\mu_+S^{-1})(I+\mu_-^{-1}S)$ and use the Neumann series to obtain the inverse of each factor.

 From now on we abbreviate $A=\diag_{n\in\bbZ_+}\{A\}$. The operator $S^{-1}-A$ acting from $\ell^2_{\ran Q_+}(\bbZ_+;\bbC^2)$ into
 $\ell^2(\bbZ_+;\bbC^2)$, cf.\ \eqref{defellQ} and \cite{BAGK}, is invertible if and only if $\lambda\notin [-2\i, 2\i]$ and in this case the inverse operator $\big(S^{-1}-A\big)^{-1}$ from $\ell^2(\bbZ_+;\bbC^2)$ onto $\ell^2_{\ran Q_+}(\bbZ_+;\bbC^2)$ for $\bu=(u_n)_{n\ge0}\in\ell^2(\bbZ_+;\bbC^2)$ is given by the formula
 \begin{equation}\label{invform}
 (\big(S^{-1}-A\big)^{-1}\bu)_n=-\sum_{k=n}^{+\infty}A^nR_-A^{-(k+1)}u_k
 + \sum_{k=0}^{n-1}A^nR_+A^{-(k+1)}u_k,\, n\ge 0; \end{equation}
 here and below we always set $\sum_{k=0}^{-1}=0$, and so the RHS of \eqref{invform} for $n=0$ is a vector from $\ran R_-=\ran Q_+$ as required in \eqref{defellQ}. Formula \eqref{invform} can be either taken from \cite{BAGK} or checked directly by multiplying $S^{-1}-A$ and the operator in \eqref{invform} and taking into account that $\ran R_+=\ran P_+$ is the set of the initial data for the {\em bounded} on $\bbZ_+$ solutions of the difference equation $y_{n+1}=Ay_n$, the projection $P_+$ is a dichotomy projection for this equation on $\bbZ_+$, and therefore the projection $R_+$ onto $\ran P_+$ parallel to $\ran Q_+$ is also a dichotomy projection since $\ran P_+\oplus \ran Q_+=\bbC^2$, cf.\ \cite{BAG, BAGK93, BAGK}  and also \cite{CL}, \cite{Cop} or \cite[Chapter 4]{DK} for discussions of dichotomies on the half-line.
 
 Formula \eqref{invform} shows that the operator $\cT_\lambda^+$ in \eqref{defT} is an operator with semi-separable kernel, that is, $\cT_\lambda^+$ can be written as an infinite matrix $\cT_\lambda^+=[T_{nk}]_{n,k=0}^{+\infty}$ so that $(\cT_\lambda^+\bu)_n=\sum_{k=0}^{+\infty}T_{nk}u_k$, where
 \begin{equation}\label{defTnk}
 T_{nk}=\begin{cases}
 -C_nA^nR_-A^{-(k+1)}B_k &\text{ for $0\le n\le k$,}\\ 
 -C_nA^nR_+A^{-(k+1)}B_k &\text{ for $0\le k\le n$.}\end{cases}
 \end{equation}
 We refer to \cite[Chapter IX]{GGK90} and \cite{GM04} for a discussion of the operators with semi-separable kernels; the results therein are used below in the proof of the second equality in \eqref{bigfive}.
 
 The matrix-valued Jost solution $\bY^+=(Y^+_n)_{n\ge0}$ of \eqref{evsys1} (with no boundary condition) is obtained as a solution to the following Volterra equation,
 \begin{equation}\label{VE}
 Y_n^+-A^nR_+=-\sum_{k=n}^{+\infty}A^{n-(k+1)}B_kC_kY_k^+,
 \end{equation}
 defined first for $n\ge N$ with $N$ large enough and then extended to all of $\bbZ_+$ as a solution to the difference equation \eqref{evsys1} via $Y_n^+:=(A^\times_n)^{-1}Y^+_{n+1}$
 for $n=0, 1,\dots,N-1$. The sequence $(Y_n^+)_{n\ge0}$ thus defined will satisfy the Volterra equation \eqref{VE} as the following inductive step shows: Suppose we know that \eqref{VE} holds for $n=N$ and that $B_{N-1}C_{N-1}Y^+_{N-1}=Y^+_N-AY^+_{N-1}$. We then use this and \eqref{VE} with $n=N$ in the following calculation yielding  \eqref{VE} for $n=N-1$,
 \begin{eqnarray*}
 A^{N-1}R_+&-&\sum_{k=N-1}^{+\infty}A^{(N-1)-(k+1)}B_kC_kY_k^+
% \\&=&
 % A^{N-1}R_+-\sum_{k=N}^{+\infty}A^{(N-1)-(k+1)}B_kC_kY_k^+-A^{-1}B_{N-1}C_{N-1}Y^+_{N-1}\\&=&A^{-1}\big( A^{N}R_+-\sum_{k=N}^{+\infty}
%  A^{N-(k+1)}B_kC_kY_k^+\big)-A^{-1}\big(Y^+_N-AY^+_{N-1}\big)
  \\&=&
  A^{-1}\big( A^{N}R_+-\sum_{k=N}^{+\infty}
  A^{N-(k+1)}B_kC_kY_k^+-Y^+_N\big)+Y^+_{N-1}=Y^+_{N-1}.
 \end{eqnarray*}
 \begin{lemma}\label{lem:Volterra} There is a large enough $N$ such that equation \eqref{VE} for $n\ge N$ has a unique solution thus yielding the matrix-valued Jost solution $\bY^+=(Y^+_n)_{n\ge0}$ of \eqref{evsys1} satisfying \eqref{mvj}.
 \end{lemma}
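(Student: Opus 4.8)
The plan is to solve the Volterra equation \eqref{VE} by the contraction mapping principle in a weighted sequence space, exploiting the hyperbolic splitting of $A$ recorded in \eqref{AA}. Writing $A=\mu_+P_++\mu_-P_-$ and recalling that $\ran R_+=\ran P_+$, so that $P_+R_+=R_+$ and $AR_+=\mu_+R_+$, one obtains the identity $A^nR_+=(\mu_+)^nR_+$; this is what makes the weight $(\mu_+)^{-n}$ in \eqref{mvj} natural. Accordingly, I would fix a (large) $N$ and work in the Banach space
\[
\cB_N=\Big\{Y=(Y_n)_{n\ge N}\subset\bbC^{2\times2}:\ \|Y\|_N:=\sup_{n\ge N}\big\|(\mu_+)^{-n}Y_n\big\|<\infty\Big\},
\]
on which I define the affine map $\cV$ by $(\cV Y)_n=A^nR_+-\sum_{k=n}^{\infty}A^{\,n-(k+1)}B_kC_kY_k$, so that fixed points of $\cV$ are exactly the solutions of \eqref{VE} for $n\ge N$.

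The heart of the argument is the estimate showing that $\cV$ is a contraction. Using $B_kC_k=b_kc_kQ_+$ and the spectral decomposition, the scaled summand becomes
\[
(\mu_+)^{-n}A^{\,n-(k+1)}B_kC_k(\mu_+)^k=b_kc_k\big[(\mu_+)^{-1}P_++(\mu_-)^{-1}(\mu_+/\mu_-)^{\,k-n}P_-\big]Q_+,
\]
and since $k\ge n$ and $|\mu_+/\mu_-|<1$ by \eqref{muineq}, the bracketed matrix is bounded in norm by a constant $C_0$ uniformly in $n\le k$. Hence
\[
\big\|(\mu_+)^{-n}\big((\cV Y)_n-(\cV Y')_n\big)\big\|\le C_0\Big(\sum_{k=n}^{\infty}|b_kc_k|\Big)\|Y-Y'\|_N.
\]
By \eqref{condbc} and Cauchy--Schwarz the sequence $(b_kc_k)$ lies in $\ell^1(\bbZ_+;\bbC)$, so its tail $\sum_{k=N}^{\infty}|b_kc_k|\to0$; choosing $N$ so large that $C_0\sum_{k=N}^{\infty}|b_kc_k|<1$ makes $\cV$ a contraction (the same bound with $Y'=0$, together with $A^nR_+=(\mu_+)^nR_+$, shows that $\cV$ maps $\cB_N$ into itself). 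The Banach fixed point theorem then yields a unique $\bY^+=(Y_n^+)_{n\ge N}$ in $\cB_N$ solving \eqref{VE}.

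It remains to check that this solution, extended to $0\le n<N$ through $Y_n^+=(A_n^{\times})^{-1}Y_{n+1}^+$ as in the paragraph preceding the statement (where it was already verified that the extension continues to satisfy \eqref{VE}), is the matrix-valued Jost solution obeying \eqref{mvj}. The decay $(\mu_+)^{-n}(Y_n^+-A^nR_+)\to0$ is immediate from the fixed-point relation and the tail estimate above. For the second requirement $Y_0^+=Y_0^+R_+$ I would right-multiply \eqref{VE} by $R_-$; since $R_+R_-=0$, the sequence $W_n:=Y_n^+R_-$ satisfies the homogeneous Volterra equation $W_n=-\sum_{k=n}^{\infty}A^{\,n-(k+1)}B_kC_kW_k$, whose only solution in $\cB_N$ is $W\equiv0$ by the uniqueness just established, giving $Y_n^+=Y_n^+R_+$ for all $n$. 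The main obstacle is precisely the mixed hyperbolicity in $A^{\,n-(k+1)}$: for $k\ge n$ the $P_-$-component contracts while the $P_+$-component would grow, and the whole scheme works only because, after scaling by $(\mu_+)^{-n}$, the dangerous $P_+$-part collapses to the bounded constant $(\mu_+)^{-1}P_+$ and the summation is tamed by the $\ell^1$-smallness of the tail of $(b_kc_k)$.
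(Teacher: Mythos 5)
Your proposal is correct and follows essentially the same route as the paper: a contraction (equivalently, Neumann-series) argument for the Volterra operator in the weighted space with norm $\sup_{n\ge N}|\mu_+|^{-n}\|\cdot\|$, where smallness of the tail $\sum_{k\ge N}|b_kc_k|$ (from \eqref{condbc} and Cauchy--Schwarz) makes the map a contraction, with the decay in \eqref{mvj} read off from the fixed-point relation. Your derivation of $Y^+_n=Y^+_nR_+$ by right-multiplying with $R_-$ and using uniqueness is just the mirror image of the paper's argument, which right-multiplies by $R_+$ and invokes uniqueness of the inhomogeneous equation.
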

 \begin{proof} We recall that $|\mu_+|<1$ and introduce the space 
 \[\ell^\infty_N:=\big\{ \bu=(u_n)_{n\ge N}:\, \|\bu\|_{\ell^\infty_N}:=\sup\big\{|\mu_+|^{-n}\|u_n\|_{\bbC^2}: n\ge N\big\}<\infty\big\}\]
 of exponentially decaying at infinity $\bbC^2$-valued sequences on $[N,+\infty)\cap\bbZ_+$. Let $T_N$ denote the operator on $\ell^\infty_N$ that appears in the RHS of \eqref{VE}, 
 \begin{equation}\label{defTN}
 (T_N\bu)_n=-\sum_{k=n}^{+\infty}A^{n-(k+1)}B_kC_ku_k,\, \bu=(u_n)_{n\ge N}\in\ell^\infty_N.
 \end{equation}
Denoting  $q_N=\sum_{n=N}^{+\infty}\|B_kC_k\|_{\bbC^{2\times 2}}$, we have
 $q_N\to0$ as $N\to\infty$ by \eqref{condbc} and \eqref{Anot}. 
  Using  \eqref{AA}  we have\footnote{We write $a\lesssim b$ if $a\le cb$ for a constant $c$ independent of any parameters contained in $a$ and $b$.} 
  $\|A^{n-(k+1)}\|_{\bbC^{2\times 2}}\lesssim|\mu_+|^{n-(k+1)}$ for $k\ge n$. Thus, for $n\ge N$,
  \begin{equation}\label{Tineq}
  |\mu_+|^{-n}\|u_n\|_{\bbC^2}\lesssim\sum_{k=n}^{+\infty}|\mu_+|^{-n}|\mu_+|^{n-(k+1)}\|B_kC_k\|_{\bbC^{2\times 2}}\|u_k\|_{\bbC^2}\lesssim q_N\|\bu\|_{\ell^\infty_N}
  \end{equation}
  and the norm of $T_N$ in $\ell^\infty_N$ is dominated by $q_N$ and therefore is less than, say, $1/2$ provided $N$ is large enough. Then $(Y_n^+)_{n\ge N}=(I-T_N)^{-1}(A^nR_+)_{n\ge N}$
   is the unique solution of \eqref{VE} whose columns are 
   in $\ell^\infty_N$ and so for $n\ge N$ we infer
   \begin{eqnarray*}
   |\mu_+|^{-n}\|Y^+_n&-&A^nR_+\|_{\bbC^{2\times 2}}
   \le\|(Y_n^+)_{n\ge N}-(A^nR_+)_{n\ge N}\|_{\ell^\infty_N}=\|T_N(Y_n^+)_{n\ge N}\|_{\ell^\infty_N}\\
   &\lesssim & q_N\|(Y_n^+)_{n\ge N}\|_{\ell^\infty_N}=q_N\|(I-T_N)^{-1}(A^nR_+)_{n\ge N}\|_{\ell^\infty_N}\\&\lesssim& q_N\|(A^nR_+)_{n\ge N}\|_{\ell^\infty_N}\lesssim q_N
   \end{eqnarray*}
   because $\|(A^nR_+)_{n\ge N}\|_{\ell^\infty_N}\lesssim 1$ since $\ran R_+=\ran P_+$.
   This yields the first assertion in \eqref{mvj} while the second follows by multiplying 
   \eqref{VE} by $R_+$ from the right and using the uniqueness of the solution $(Y_n^+)_{n\ge N}=(Y^+_nR_+)_{n\ge N}$.
 \end{proof}
 
  \begin{remark}\label{rem:Y+}
 Assertion $Y^+_n=Y^+_nR_+$, cf.\ \eqref{mvj}, and formula \eqref{winv1} for $R_+$ show that the first column of the matrix $Y^+_n$ is zero. Since $(Y^+_n)_{n\ge0}$ solves the difference equation \eqref{evsys1}, we conclude that there is a sequence $(z_n)_{n\ge-1}$ that solves the difference equation  \eqref{main} (with no boundary condition) such that 
 \begin{equation}\label{Y+z}
 Y^+_n=\begin{bmatrix}0&z_n\\0&z_{n-1}\end{bmatrix}, \, n\ge0.
 \end{equation}
 \end{remark}
 
 Next, we proceed to discuss the Jost solution $\bz^+=(z_n^+)_{n\ge-1}$  obtained as a solution to  the following scalar Volterra equation,
\begin{equation}\label{jostz2-new}
z_n^+-(\mu_+)^n=-(\mu_+-\mu_-)^{-1}\sum_{k=n}^{+\infty}b_kc_k\big((\mu_+)^{n-k}
-(\mu_-)^{n-k}\big)z^+_k,
\end{equation}
at first for $n\ge N$ with $N$ sufficiently large. 
A computation using $(\mu_+)^2+\lambda\mu_+-1=0$ shows that $z_n^+$ from \eqref{jostz2-new} satisfy \eqref{main}, see \cite{LG21} for a similar compuation.
Therefore,  the solution $(z_n^+)_{n\ge N}$ to \eqref{jostz2-new} % $n\ge-1$ but since solutions of \eqref{jostz2-new} solve \eqref{eulerev1}, they 
can be propagated backward as solution to \eqref{main}, and thus $z^+_n$ can be defined for all $n\ge-1$.
We now record properties of the Jost solution.

\begin{remark}\label{holF} The Jost solution $\bz^+(\lambda)$ is unique and holomorphic in $\lambda$. This follows by a standard argument, cf.\ \cite{CL07,GLM07} and the references therein, presented in \cite[Remark 2.2]{LV24} and based on passing in \eqref{jostz2-new} to the new unknowns $z^+_n/\mu_+^n$ and proving that the RHS of the resulting equation is a strict contraction in the space of exponentially decaying at infinity sequences, see the analogous  proof of Lemma \ref{lem:Volterra}. As in the lemma, this yields the property  \eqref{zpm-asymp}. 

We stress that the Jost solution $\bz^+=(z^+_n(\lambda))_{n\ge-1}$ is defined for $n\ge-1$ as the solution of the difference equation \eqref{main}, with no boundary condition \eqref{main-bc}. Theorem \ref{thm:main} shows, in particular,  that  $z_{-1}^+(\lambda)=0$ if and only if $\lambda$ is an eigenvalue (which is not surprising as in this case $\bz^+(\lambda)$ is the exponentially decaying solution to \eqref{main} that also satisfies the boundary condition \eqref{main-bc}).\end{remark}
\begin{remark}\label{rem:lem2.3} Let us consider the difference equation \eqref{eulerev1} which is a particular case of \eqref{main} with $b_n,c_n$ as in \eqref{defbc} and $\rho_n$
satisfying \eqref{condrho}. Then the proof of \cite[Lemma 2.3]{LV24} applies and shows that the Jost solution satisfies $z_n^+\neq 0$ for all $n\ge0$. 
 \end{remark}

 %We are ready to present the proof of Theorem \ref{thm:main}.
 \begin{proof}[Proof of Theorem \ref{thm:main}]
 The three equalities in \eqref{bigfive} are proved as follows.

 {\bf 1.} We prove $\det(I-K_\lambda^+)=\det(I-\cT_\lambda^+)$.\, We claim that $K_\lambda^+$ from \eqref{defK} is the $(1,1)$-block of the operator $\cT_\lambda^+$ from \eqref{defT} in the decomposition \[\ell^2(\bbZ_+;\bbC^2)=\ran(\diag\{Q_+\})\oplus\ran(\diag\{Q_-\}),\]  with $\diag=\diag_{\bbZ_+}$, for the projections $Q_\pm$ from \eqref{Anot}, that is, that 
\begin{equation}\label{clKT}
\diag\{C_n\}(S^{-1}-A)^{-1}\diag\{B_n\}=\begin{bmatrix}-\diag\{c_n\}(S-S^{-1}-\lambda)^{-1}\diag\{b_n\}&0\\0&0\end{bmatrix}.
\end{equation}
Clearly, this implies the required equality. 

To begin the proof of the claim, we fix $\bu=(u_n)_{n\ge0}\in\ell^2(\bbZ_+;\bbC^2)$ amd denote $v_n=\big(\diag_{n\in\bbZ_+}\{Q_+\}(S^{-1}-A)
\diag_{n\in\bbZ_+}\{Q_+\}\bu\big)_n$. Formulas \eqref{Anot} and \eqref{formRpm} show that  $AR_+=\mu_+R_+$ and $R_-A^{-1}=-\mu_+R_-$, and thus \eqref{invform} and \eqref{AA} imply
that $v_n$ is equal to
\begin{equation}\label{vn}
-\sum_{k=n}^{+\infty}(-\mu_+)^{k+1}Q_+W\widetilde{A}^nW^{-1}R_-Q_+u_k+\sum_{k=0}^{n-1}\mu_+^nQ_+R_+W\widetilde{A}^{-(k+1)}W^{-1}Q_+u_k.
\end{equation}
We now use formulas \eqref{winv1} and \eqref{formRpm} to compute the matrices
$Q_+W$, $W^{-1}R_-Q_+$, $Q_+R_+W$, $W^{-1}Q_+$. Plugging this into \eqref{vn} and using that $\mu_\pm$ solve the equation $\mu^2+\lambda\mu-1=0$, after a tedious computation we conclude that the first component of the vector $v_n=Q_+v_n\in\bbC^2$ is given by the formula
\[(\mu_+-\mu_-)^{-1}\Big(\sum_{k=n}^{+\infty}(\mu_-)^{n-k}\big(1-\big(\frac{\mu_+}{\mu_-}\big)^{n+1}\big)z_k+\sum_{k=0}^{n-1}(\mu_+)^{n-k}\big(1-\big(\frac{\mu_+}{\mu_-}\big)^{k+1}\big)z_k\Big).\]
We now use \eqref{INV0} to recognize that the last expression is $((S-S^{-1}-\lambda)^{-1}\bz)_n$ where $z_n$ denote the first component of the vector $u_n\in\bbC^2$, $n\ge0$. Multiplying by $c_n$ and $b_n$ and recalling \eqref{Anot} yields
the required claim \eqref{clKT}.

{\bf 2.} We prove $\det(I-\cT_\lambda^+)=\cE^+(\lambda)$ in three steps. 
The first step is to show that it suffices to prove the equality only for finitely supported $(B_n)_{n\ge0}$ and $(C_n)_{n\ge0}$. This is fairly standard and follows, say, 
as the {\em claim} in the proof of  \cite[Theorem 4.6]{CL07} or the proof of \cite[Theorem 4.3]{GM04}: Indeed, replace $B_k,C_k$ in \eqref{defTN} by $\mathbf{1}_M(k)B_k$, $\mathbf{1}_M(k)C_k$ where $\mathbf{1}_M$
is the characteristic function of $[0,M]$ equals to one on the segment and to zero outside.
As in the proof of Lemma \ref{lem:Volterra}, we denote the respective operator by $T_N^{(M)}$ and the respective matrix-valued Jost solution by $\bY^{+,M}$. Choosing a large $N$ and even larger $M$ the proof of Lemma \ref{lem:Volterra} yields $\|T_N-T_N^{(M)}\|_{\cB(\ell^\infty_N)}\lesssim q_{M+1}\to0$ as $M\to\infty$ for the operator norm on $\ell^\infty_N$. Then $\|Y^+_0-Y^{+,M}_0\|_{\bbC^{2\times 2}}\to0$ as in the proof of Lemma \ref{lem:Volterra} and therefore $\cE^{+,M}(\lambda)\to\cE^+(\lambda)$ as $M\to\infty$ for the Evans function $\cE^{+,M}$ obtained by replacing $Y^+_0$ in \eqref{defcE} by $Y_0^{+,M}$. Analogously, let $\cT_\lambda^{+,M}$ denote the operator in \eqref{defT} obtained by replacing  
$B_k,C_k$ by $\mathbf{1}_M(k)B_k$, $\mathbf{1}_M(k)C_k$. Then \eqref{condbc} yields convergence of $\cT_\lambda^{+,M}$ to $\cT_\lambda^+$ in the trace class norm and so
$\det(I-\cT_\lambda^{+,M})\to\det(I-\cT_\lambda^+)$ as $M\to\infty$ completing the first step in the proof.
From now on we thus assume that sequences $(B_n)_{n\ge0}, (C_n)_{n\ge0}$ are finitely supported.

The second step in the proof is a reduction of the infinite dimensional determinant $\det(I-\cT_\lambda^+)$ to a finite dimensional one. It follows a well established path, see \cite{GLM07,GM04,GN15,GGK90}. Multiplying the operator in the RHS of  \eqref{invform} by 
$\diag_{n\in\bbZ_+}\{C_n\}$ and $\diag_{k\in\bbZ_+}\{B_k\}$ 
and adding and subtracting $\sum_{k=n}^{+\infty}C_nA^nR_-A^{-(k+1)}B_ku_k$ to the result we arrive at the identity 
\begin{equation}\label{TH0}
\cT_\lambda^+=H_++H_2H_3, \text{ where $H_+=H_0+H_1$,}\end{equation} and we introduce notations
\begin{eqnarray}\label{TH} 
 H_0&=&-\diag_{n\in\bbZ_+}\{C_nA^{-1}B_n\},\quad
(H_1\bu)_n=-\sum_{k=n+1}^{+\infty}C_nA^{n-(k+1)}B_ku_k,\\
(H_2y)_n&=&C_nA^nR_+y, \quad H_3\bu=R_+\sum_{k=0}^{+\infty}A^{-(k+1)}B_ku_k
\end{eqnarray}
for $y\in\bbC^2$ and $\bu=(u_n)_{n\ge0}\in\ell^2(\bbZ_+;\bbC^2)$. Here, the operators $H_0$ and $H_1$ in $\ell^2(\bbZ_+;\bbC^2)$ are of trace class by \eqref{condbc} while the operators $H_2$ and $H_3$ are of rank one as $H_2$ acts from $\ran R_+=\ran P_+\subset\bbC^2$ into $\ell^2(\bbZ_+;\bbC^2)$ while $H_3$ acts from  $\ell^2(\bbZ_+;\bbC^2)$ into $\ran R_+=\ran P_+\subset\bbC^2$. The diagonal operator $I-H_0$ is invertible with $\det(I-H_0)$ being equal to $1$  by \eqref{Anot1} because
\[\prod_{n=0}^{+\infty}\det(I+C_nA^{-1}B_n)=\prod_{n=0}^{+\infty}\det(I+A^{-1}B_nC_n)=\prod_{n=0}^{+\infty}\det(A^{-1})\det(A+B_nC_n).\]
 The operator $H_1$ is block-lower-triangular, and thus $I-H_+$ is invertible
with $\det(I-H_+)=1$. Writing $I-\cT_\lambda^+=(I-H_+)\big(I-(I-H_+)^{-1}H_2H_3\big)$ and changing the order of factors in the determinant, we arrive at the identity
\begin{equation}\label{idTH}
\det(I_{\ell^2(\bbZ_+;\bbC^2)}-\cT_\lambda^+)=
\det_{\bbC^{2\times 2}}\big(I_{2\times 2}-H_3(I-H_+)^{-1}H_2\big)
\end{equation}
that reduces the computation of the infinite dimensional determinant for $I-\cT_\lambda^+$ to the finite dimensional determinant for the operator $H_3(I-H_+)^{-1}H_2$ acting in $\ran R_+$ thus completing the second step in the proof.

The third step relates the RHS of \eqref{idTH} and the solution $(Y_n^+)_{n\ge0}$ to the Volterra equation 
\eqref{VE}. Indeed, for any $y\in\bbC^2$ we let $u_n=C_nY_n^+y$ and $\bu=(u_n)_{n\ge0}$. Multiplying \eqref{VE} from the left by $C_n$ and applying the resulting matrices to $y$ yields 
\[u_n=C_nA^nR_+y-\sum_{k=n}^{+\infty}C_nA^{n-(k+1)}B_ku_k=(H_2y)_n+
(H_+\bu)_n, \, n\in\bbZ_+,\]
or $\bu=(I-H_+)^{-1}H_2y$. Multiplying by $H_3$ from the left, by the definition of $H_3$,
\[H_3(I-H_+)^{-1}H_2y=H_3\bu=R_+\sum_{k=0}^{+\infty}A^{-(k+1)}B_ku_k.\]
Since $y$ is arbitrary, the respective matrices are equal, that is,
\[H_3(I-H_+)^{-1}H_2=R_+\sum_{k=0}^{+\infty}A^{-(k+1)}B_kC_kY_k^+.\]
 Since $Y^+_n=Y^+_nR_+$ by \eqref{mvj}, the last identity and formula \eqref{idTH} yield
\begin{equation}\label{RR}
\det(I-\cT_\lambda^+)=\det_{\bbC^{2\times 2}}\big(I_{2\times 2}-R_+\sum_{k=0}^{+\infty}A^{-(k+1)}B_kC_kY^+_kR_+\big).
\end{equation}
Writing matrices in the block form using the 
direct sum decomposition $\bbC^2=\ran R_+\oplus\ran R_-$ gives the equality of the determinants of the following two matrices,
\begin{eqnarray*}
I_{2\times 2}-\sum_{k=0}^{+\infty}A^{-(k+1)}B_kC_kY^+_kR_+&=&\begin{bmatrix}
R_+-R_+\sum_{k=0}^{+\infty}A^{-(k+1)}B_kC_kY^+_kR_+&0\\
-R_-\sum_{k=0}^{+\infty}A^{-(k+1)}B_kC_kY^+_kR_+&R_-\end{bmatrix},\\
I_{2\times 2}-R_+\sum_{k=0}^{+\infty}A^{-(k+1)}B_kC_kY^+_kR_+&=&
\begin{bmatrix}
R_+-R_+\sum_{k=0}^{+\infty}A^{-(k+1)}B_kC_kY^+_kR_+&0\\
0&R_-\end{bmatrix}.
\end{eqnarray*}
This, \eqref{VE} with $n=0$, and \eqref{RR} show that  $\det(I-\cT_\lambda^+)$ is equal to the determinant of the matrix 
\[I_{2\times 2}-\sum_{k=0}^{+\infty}A^{-(k+1)}B_kC_kY^+_kR_+=R_+-\sum_{k=0}^{+\infty}A^{-(k+1)}B_kC_kY^+_k+R_-=Y^+_0+R_-,\]
completing the proof of the equality $\det(I-\cT_\lambda^+)=\cE^+(\lambda)$ by \eqref{defcE}.

{\bf 3.}\, We prove $\cE^+(\lambda)=\cF^+(\lambda)$. Using the Jost solution $(z^+_n)_{n\ge-1}$ and the regular solution satisfying $z_0^{\rm r}=1$, we recall that $\cW(\bz^+,\bz^{\rm r})_0=z^+_{-1}$ in \eqref{defF}. By Remark \ref{rem:Y+} the solution $(Y_n^+)_{n\ge0}$ is of the form \eqref{Y+z} with some $(z_n)_{n\ge-1}$. We claim that the solution $(z_n)_{n\ge-1}$ in formula \eqref{Y+z} for $Y_n^+$ satisfies $z_n=\mu_+z_n^+$ where $(z^+_n)_{n\ge-1}$ is the Jost solution from \eqref{jostz2-new}. Assuming the claim, we use formulas \eqref{defcE}, \eqref{Y+z}, \eqref{formRpm}, and \eqref{defF} to obtain the desired result,
\[
\cE^+(\lambda)=\det(Y^+_0+R_-)=\det\begin{bmatrix}1\,&\,\,-\mu_++\mu_+z^+_0\\0\,&\,\,\mu_+z^+_{-1}\end{bmatrix}=\mu_+(\lambda)z^+_{-1}(\lambda)=\cF^+(\lambda).
\]

It remains to prove the claim  $z_n=\mu_+z_n^+$ in formula \eqref{Y+z}. Using \eqref{winv1} we multiply equation \eqref{VE} by $W^{-1}$ from the left and $W$ from the right passing to the new unknowns $\widetilde{Y}^+_n$ and using \eqref{winv1} and  $(z_n)_{n\ge-1}$ from \eqref{Y+z},
\[\widetilde{Y}^+_n=W^{-1}Y_n^+W=(\mu_+-\mu_-)^{-1}\begin{bmatrix}z_n-\mu_-z_{n-1}&z_n-\mu_-z_{n-1}\\-(z_n-\mu_+z_{n-1})\,\,&\,\,-(z_n-\mu_+z_{n-1})\end{bmatrix}.\]
 Noting that $A^nR_+=\mu_+^nR_+$ by \eqref{Anot} and \eqref{formRpm}, and explicitly computing all matrices that appear in the equation for $\widetilde{Y}^+_n$, we arrive at the following equations,
\begin{eqnarray*}
(\mu_+-\mu_-)^{-1}\big(z_n-\mu_-z_{n-1}\big)&=&\mu_+^n-(\mu_+-\mu_-)^{-1}
\sum_{k=n}^{+\infty}b_kc_k\mu_+^{n-(k+1)}z_k,\\
(\mu_+-\mu_-)^{-1}\big(-z_n+\mu_+z_{n-1}\big)&=&-(\mu_+-\mu_-)^{-1}
\sum_{k=n}^{+\infty}b_kc_k\big(-\mu_-^{n-(k+1)}\big)z_k.
\end{eqnarray*}
Multiplying the first equation by $\mu_+$ and the second by $\mu_-$ and adding the results,
\[z_n=\mu_+^{n+1}-(\mu_+-\mu_-)^{-1}\sum_{k=n}^{+\infty}b_kc_k\big(\mu_+^{n-k}-\mu_-^{n-k}\big)z_k.\]
Divided by $\mu_+$, this is equation \eqref{jostz2-new} for $z_n/\mu_+$ and the uniqueness of the solution of the equation yields the claim thus completing the proof of \eqref{bigfive} in the theorem.

{\bf 4.}\, To prove the last assertion in the theorem, we rely on the Birman-Schwinger principle saying that $\lambda\in\spec\big(S-S^{-1}+\diag_{n\in\bbZ_+}\{b_nc_n\}\big)$ if and only if $\lambda$ is a zero of $\det(I-K_\lambda^+)$ with the same multiplicity which follows from the fact that the operator 
$S-S^{-1}+\diag_{n\in\bbZ_+}\{b_nc_n\}-\lambda$ can be written as the product
\[\big(S-S^{-1}-\lambda\big)\big(I_{\ell^2(\bbZ_+;\bbC)}-\big(S-S^{-1}-\lambda\big)^{-1}\diag_{n\in\bbZ_+}\{b_n\}\times\diag_{n\in\bbZ_+}\{c_n\}\big)\]
and the standard property $\det(I-K_1\times K_2)=\det(I-K_2\times K_1)$ of the operator determinants. That the eigenvalues $\lambda$ are simple follows from the fact that $\bz$ is an eigensequence if and only if $\bz$ is proportional to $\bz^+(\lambda)$ with $\lambda$ satisfying $z_{-1}^+(\lambda)=0$ (in other words, the dichotomy projection for \eqref{evsys1} has rank one).\end{proof}
 %We now give the proof of Corollary \ref{cor:main}.
 \begin{proof}[Proof of Corollary \ref{cor:main}]
We follow \cite{LV24} and show that $\cF^+(\lambda)=\cG^+(\lambda)$ for $\cF^+$ from \eqref{defF} and $\cG^+$ from \eqref{defG}.
 Here, we assume that $\re(\lambda)>0$.  % and so the eigenvalues $\mu_\pm(\lambda)$ in \eqref{muineq} are given by   
%\begin{equation}\label{defmu}
%\mu_\pm=\mu_\pm(\lambda)=-\frac{\lambda}{2}\pm\sqrt{\big(\frac{\lambda}{2}\big)^2+1}.
%\end{equation}
The function $\cF^+$ is holomorphic in $\lambda$ by Remark \ref{holF}. The convergent continued fraction in \eqref{deffg} is holomorphic in $\lambda$ by the classical Stieltjes-Vitali Theorem \cite[Theorem 4.30]{JT}. Thus, it is enough to show the desired equality only for $\lambda>0$ which we assume from now on.
 We recall that $z_n^+\neq0$ for $n\ge0$ by Remark \ref{rem:lem2.3}, introduce $v^\pm_n=z^\pm_{n-1}/z^\pm_n$ and re-write equation 
 \eqref{eulerev1} for $\bz^\pm$ as $v^+_n=\frac{\lambda}{\rho_n}+\frac{1}{v^+_{n+1}}$ for $n\ge0$.
 The last formula iterated forward produces the continued fraction \eqref{deffg}.
 Because the continued fraction converges and $\lambda>0$, an argument from \cite[pp.2063]{DLMVW20} involving monotonicity of the sequences formed by the odd and even truncated continued fractions yields 
$ v_0^+=g_+(\lambda)+\lambda/\rho_0$. Therefore,
 \[\cW(\bz^+,\bz^{\rm r})_0=z^+_{-1}=z^+_0v^+_0=z^+_0(g_+(\lambda)+\lambda/\rho_0),\]
 and the desired equality of $\cF^+$ and $\cG^+$ follows by \eqref{defF} and \eqref{defG}.
\end{proof}

%The next lemma is needed for Corollary \ref{cor:main:eul}.

\begin{lemma}\label{lem:hl} Assume \eqref{condpq} and $\lambda>0$. Then the solutions of \eqref{weq} on $\bbZ$ are in one-to-one correspondence with the $\ell^2(\bbZ_+;\bbC)$-solutions to the eigenvalue problem \eqref{eulerev1} given by $z_n=\rho_nw_n$ for $n\ge-1$ and $w_n=0$ for $n\le-2$.
\end{lemma}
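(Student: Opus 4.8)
The plan is to exploit the single structural fact behind the lemma, namely $\rho_{-1}=0$ (which holds by \eqref{condpq} since $\|\bq-\bp\|=\|\bp\|$), to decouple the bilateral recurrence \eqref{weq} at the site $n=-1$. First I would record that the substitution $z_n=\rho_n w_n$ turns the equations \eqref{weq} for $n\ge0$ into \eqref{eulerev1}: indeed $\rho_{n-1}w_{n-1}-\rho_{n+1}w_{n+1}=\lambda w_n$ reads $z_{n-1}-z_{n+1}=\lambda w_n=\lambda z_n/\rho_n$ once $\rho_n\ne0$, while the boundary relation $z_{-1}=\rho_{-1}w_{-1}=0$ comes for free from $\rho_{-1}=0$; since $(\rho_n)_{n\ge0}$ is bounded, an $\ell^2(\bbZ)$-solution $(w_n)$ yields $(z_n)_{n\ge-1}\in\ell^2(\bbZ_+;\bbC)$ solving \eqref{eulerev1}. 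The entire content of the lemma then rests on the claim that every $\ell^2(\bbZ)$-solution of \eqref{weq} vanishes for $n\le-2$; this is the step I expect to be the main obstacle.

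To prove the vanishing, I would isolate the subsystem of \eqref{weq} for $n\le-2$. Because $\rho_{-1}=0$, the site $n=-1$ drops out of the equation at $n=-2$, so, setting $\zeta_n=\rho_nw_n$ (legitimate for $n\le-2$ as $\rho_n\ne0$ there) and $\zeta_{-1}:=0$, the subsystem becomes $\zeta_{n-1}-\zeta_{n+1}=\lambda\zeta_n/\rho_n$ for $n\le-2$ with $\zeta_{-1}=0$. Reflecting via $m=-2-n$, $\sigma_m:=\rho_{-2-m}$, $\xi_m:=\zeta_{-2-m}$ maps this onto a standard half-line problem on $\ell^2(\bbZ_+;\bbC)$,
\[
\xi_{m-1}-\xi_{m+1}=-\lambda\,\xi_m/\sigma_m,\quad m\ge0,\quad \xi_{-1}=0,
\]
equivalently $\diag_{m\in\bbZ_+}\{\sigma_m\}(S-S^{-1})\xi=-\lambda\xi$, where now the weights $\sigma_m=\rho_{-2-m}\in(0,1)$ are all strictly positive and bounded below, tending to $1$. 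This is exactly where the geometry \eqref{condpq} enters: the only sites at which $\rho$ fails to be positive, namely $n=0$ and $n=-1$, have been placed in the retained half-line, so the discarded tail carries strictly positive weights.

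The key point is then a positivity/skew-adjointness argument. Writing $P=\diag_{m\in\bbZ_+}\{\sigma_m\}$ and $D=S-S^{-1}$ on $\ell^2(\bbZ_+;\bbC)$, one has $P>0$ bounded with bounded inverse (since $\inf_m\sigma_m>0$), while $D^*=S^{-1}-S=-D$ is skew-adjoint. Substituting $\xi=P^{1/2}u$ conjugates $PD\xi=-\lambda\xi$ to $\widetilde D u=-\lambda u$ with $\widetilde D=P^{1/2}DP^{1/2}$ again skew-adjoint. Pairing with $u$ gives $\langle\widetilde D u,u\rangle=-\lambda\|u\|^2$; the left side is purely imaginary by skew-adjointness while the right side is real because $\lambda\in\bbR$, so both vanish and $u=0$. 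As $\lambda>0$ this forces $\xi\equiv0$, hence $\zeta_n=0$ and $w_n=0$ for all $n\le-2$, proving the claim. The hypothesis that $\lambda$ is real and nonzero is essential here and reflects the fact that the positive-weight tail contributes only imaginary spectrum.

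Finally I would assemble the bijection; the forward map of the first paragraph already sends $\ell^2(\bbZ)$-solutions to $\ell^2(\bbZ_+)$-solutions of \eqref{eulerev1}. For injectivity, a solution $(w_n)$ is recovered from its image by $w_n=z_n/\rho_n$ for $n\ge0$, by $w_n=0$ for $n\le-2$ (the claim), and by $w_{-1}=-\rho_0w_0/\lambda$ read off from \eqref{weq} at $n=-1$ using $w_{-2}=0$. For surjectivity, given an $\ell^2(\bbZ_+)$-solution $(z_n)_{n\ge-1}$ of \eqref{eulerev1} I would define $(w_n)$ by these same formulas and verify directly that it lies in $\ell^2(\bbZ)$ and satisfies \eqref{weq} at every $n$; the only non-routine checks are the boundary indices $n=-1,-2$, which close precisely because $\rho_{-1}=0$ and $w_{-2}=w_{-3}=0$. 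This yields the claimed one-to-one correspondence.
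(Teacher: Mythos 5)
Your proof is correct, but your treatment of the key step --- showing every $\ell^2(\bbZ)$-solution of \eqref{weq} vanishes for $n\le-2$ --- is genuinely different from the paper's. The paper argues by contradiction with an asymptotic growth estimate: if $z_{-2}\neq0$, then (invoking the proof of \cite[Lemma 2.3]{LV24}, which uses $\lambda>0$ and the positivity of $\rho_n$ for $n\le -2$) the backward solution $(z_n)_{n\le-2}$ grows exponentially as $n\to-\infty$, contradicting square-summability; then $z_{-1}=z_{-2}=0$ propagates to $z_n=0$ for all $n\le-2$ via the recursion. You instead reflect the tail onto a half-line problem $P(S-S^{-1})\xi=-\lambda\xi$ with $P=\diag_{m}\{\sigma_m\}$, $\sigma_m=\rho_{-2-m}$ strictly positive and bounded below (which indeed follows from \eqref{condpq} and $\rho_n\to1$), and conjugate by $P^{1/2}$ to get a bounded skew-adjoint operator, whose point spectrum is purely imaginary; this kills the whole tail in one stroke. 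Your argument is self-contained (no appeal to the external lemma of \cite{LV24}), and it is actually stronger: the pairing identity $\langle\widetilde{D}u,u\rangle=-\lambda\|u\|^2$ rules out any eigenvalue with $\re(\lambda)\neq0$, not just $\lambda>0$. What the paper's route buys in exchange is consistency with the machinery used throughout (Jost solutions, continued fractions, and the monotonicity arguments of \cite{LV24}), which the authors reuse elsewhere; your spectral argument, while cleaner here, does not produce the quantitative growth information that those other arguments rely on. Your verification of the bijection itself (the formulas $w_n=z_n/\rho_n$ for $n\ge0$, $w_{-1}=-\rho_0 w_0/\lambda=-z_0/\lambda$, $w_n=0$ for $n\le-2$, and the index-by-index check of \eqref{weq} at $n=0,-1,-2$) matches the paper's.
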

\begin{proof}
Suppose that $(w_{n})_{n\in\bbZ}$ solves \eqref{weq} and let $z_n=\rho_nw_n$ for all $n\in\bbZ$. Then $z_{-1}=0$ since $\rho_{-1}=0$ and the sequence  $(z_{n})_{n\le-1}$ solves the equation 
\begin{equation}\label{zneq}
z_{n-1}-z_{n+1}=\lambda z_n/\rho_{n} \text{ for $n\le-2$ and $z_{-1}=0$.}
\end{equation}
%If $n=-1$ there is no equation in the $z_{n}$ variables except that $z_{-1}=0$. If $n=0$ we get $z_{1}=-(\lambda/\rho_{0})z_{0}$ and for $n \geq 0$, $z_{n}$ solves the equation \eqref{zneq}. If $n=-2$ we obtain $z_{-3}=(\lambda/\rho_{-2})z_{-2}$ and for $n \leq 2$, $z_{n}$ again solves \eqref{zneq}.
%Consider \eqref{zneq} for $n \leq -2$. 
If $z_{-2} \neq 0$ then the sequence $(z_{n})_{n \leq -2}$ due to $\lambda>0$ exponentially grows as $n\to-\infty$ as shown in  the proof of \cite[Lemma 2.3]{LV24} contradicting $(w_n)_{n\in\bbZ}\in\ell^2(\bbZ;\bbC)$ in \eqref{weq}. So one has $z_{-2}=0$, $z_{-1}=0$ and, as a result, $z_{n}=0$ for all $n \leq -2$. This, in turn, implies that $w_{n}=0$ for all $n \leq -2$. 

%The arguments in the preceding paragraph also imply the fact that if $(w_{n}) \in \ell_{2}(\mathbb(Z))$ solves \eqref{evim} then $(z_{n}) \in \ell_{2}(\mathbb Z_{+})$ solves \eqref{znbvp}.

Conversely, suppose that $(z_{n})_{n\ge0} \in \ell_{2}(\mathbb Z_{+})$, $z_{-1}=0$ solves \eqref{eulerev1} and let $w_{n}= z_{n}/\rho_{n}$ for $n\ge0$, $w_{-1}= -z_{0}/\lambda$, and $w_n=0$ for $n\le-2$. Then $(w_{n})_{n\in\bbZ}$ so defined solves \eqref{weq}. 
\end{proof}

%We conclude with the proof of Corollary \ref{cor:main:eul}.
 \begin{proof}[Proof of Corollary \ref{cor:main:eul}]
 We follow the proof of \cite[Corollary 1.3]{LV24}. Due to Lemma \ref{lem:hl}, Theorem \ref{thm:main} yields the first assertion in the corollary. To finish the proof it suffices  to show that there is a positive root of the function $\cG^+$ defined in \eqref{defG}, equivalently, that  for some $\lambda>0$ and $g_+$  defined in \eqref{deffg} one has  $-\lambda/\rho_0=g_+(\lambda)$. We recall that $\rho_0<0$ and $g_+(\lambda)>0$ for $\lambda>0$ because condition \eqref{condrho} holds. The proof is completed by using the relations
 $\lim_{\lambda\to0^+}g_+(\lambda)=1$ and $\lim_{\lambda\to+\infty}g_+(\lambda)=0$ established in \cite[Lemma 2.10(4)]{DLMVW20}.
\end{proof}

%\begin{acknowledgement}
%Y.L.\ was supported by the NSF grant DMS-2106157, and would like to
%thank the Courant Institute of Mathematical Sciences at NYU and
%especially Prof.\ Lai-Sang Young for their hospitality.
%\end{acknowledgement}

\end{document}